\newtheorem{thm}{Theorem}[section]
\newtheorem{lem}[thm]{Lemma}
\newtheorem{prop}[thm]{Proposition}
\theoremstyle{definition}
\theoremstyle{remark}
\title{Classifying edge-biregular maps \\ of negative prime Euler characteristic}
\author{Olivia Jeans${ }^1$ and Jozef \v{S}ir\'{a}\v{n}${ }^{1,2}$ \\  \ \\
\small{${ }^1$The Open University, Milton Keynes, UK} \\ \small{${ }^2$Slovak University of Technology, Bratislava, Slovakia}}
\date{}
\begin{document}

\maketitle

\begin{abstract}
\noindent An edge-biregular map arises as a smooth normal quotient of a unique index-two subgroup of a full triangle group acting with two edge-orbits. We give a classification of all finite edge-biregular maps on surfaces of negative prime Euler characteristic.
\end{abstract}

\section{Introduction}\label{sec:intro}

A {\em map} is a 2-cell embedding of a connected graph on a surface (which may be orientable or not but without boundary components). As such it is possible to form the barycentric subdivision of a map, and the resulting regions are the {\em flags} of the map. An {\em automorphism} of a map is a structure-preserving bijection of the set of flags onto itself, and the set of all automorphisms of a map form its {\em automorphism group}.
\smallskip

Informally, how much symmetry a map displays may be measured by the size of its automorphism group. If a map automorphism fixes a flag, then, by connectivity, it must fix all flags and hence be the identity; this means that the action of the automorphism group of a map is semi-regular on flags. A largest `level of symmetry' exhibited by a map therefore arises when the automorphism group is regular, that is semi-regular and transitive, on the set of flags. Such maps are {\em fully regular}; they are, in some sense, the most symmetric maps \cite{Sir-BCC}. Elements of the automorphism group of a fully regular map can be identified with flags in a one-to-one manner by fixing a flag and assigning to every other flag the unique automorphism taking the fixed flag to the other one.
\smallskip

An obvious consequence of full regularity of a map is that all its vertices have the same valency and all the faces have boundary walks of the same length; we speak about a map of {\em type} $(k,\ell)$ in the case of valency $k$ and face length $\ell$. As it turns out, every fully regular map of type $(k,\ell)$ is a smooth normal quotient of a {\em $(k,\ell)$-tessellation} of a simply connected surface by $\ell$-gons, $k$ of which meet at every vertex. Equivalently, in a group-theoretic language, the automorphism group of a fully regular map of type $(k,\ell)$ is a torsion-free normal quotient of the automorphism group of a $(k,\ell)$-tessellation; the latter is known as the full $(2,k,\ell)$-triangle group.
\smallskip

There are other classes of `highly symmetric' maps which are not necessarily fully regular but which have automorphism groups that can still be described as `large'. Perhaps the most natural candidate for a map to qualify for such a distinction would be when a group of automorphisms of the map has two (necessarily equal-sized) orbits and is regular on each. Following the example of existing graph-theoretic language \cite{DM, Mar, Zho}, we use the term {\em biregular} to describe such maps. As in the case of fully regular maps, automorphism groups of biregular maps of type $(k,\ell)$ also arise as torsion-free normal quotients, but this time of index-two subgroups of the full $(2,k,\ell)$-triangle groups. As we shall see, a full $(2,k,\ell)$-triangle group may contain up to $7$ subgroups of index two (depending on the parity of $k$ and $\ell$). This potentially gives rise to $7$ different kinds of biregular maps, and we now give two examples.
\smallskip

Perhaps the most well known kind of biregularity arises from the subgroup of a full triangle group formed by orientation-preserving automorphisms, leading to a widely studied class of maps known as orientably-regular \cite{Sir-BCC} or rotary \cite{Wil}. We note that orientability of a map is equivalent to admitting local orientations at vertices that are consistent when passing between incident vertices along an edge on the carrier surface of the map. An opposite phenomenon, namely, when a map of type $(k,\ell)$ admits an assignment of local orientations at vertices which disagree when traversing along an arbitrary edge on the surface and, at the same time, exhibits the highest `level of symmetry' with respect to this property, was studied in \cite{BCS}. These maps, called bi-rotary in \cite{BCS}, arise as normal torsion-free quotients of a different index-two subgroup of the full $(2,k,\ell)$-triangle group.
\smallskip

In this paper we will focus on still another kind of biregular maps, those admitting an action of the map automorphism group with two orbits on edges, accordingly called {\em edge-biregular}. It turns out that edge-biregular maps of type $(k,\ell)$ arise as torsion-free normal quotients from a {\em unique} index-two subgroup of the full $(2,k,\ell)$-triangle group. In such a map, edges emanating from a vertex in their cyclic order on the carrier surface must necessarily alternate between the two orbits, so that $k$ must be even. Similarly, $\ell$ must also be even.
\smallskip

Edge-biregular maps have been investigated in great detail in \cite{Jea}, including their classification on surfaces of non-negative Euler characteristic and also a classification of such maps with a dihedral automorphism group. Our aim here is to complement the results of \cite{Jea} by deriving a classification of edge-biregular maps on surfaces with negative prime Euler characteristic, extending thereby also the existing classification results for fully regular maps \cite{BNS} and bi-rotary maps \cite{BCS} on these surfaces.
\smallskip

To this end, in section 2 we introduce biregular maps in the wider context of symmetric maps of a given type and establish notation and some basic facts about edge-biregular maps. Section 3 contains a summary of our results on classification of edge-biregular maps on surfaces of negative prime Euler characteristic and a setup of proofs; these are subsequently given in sections 4 and 5. Finally section 6 contains further observations and remarks.

\section{Algebra of edge-biregular maps}\label{sec:alg}

It is well known (see for example \cite{JoSi} or \cite{BrSi}, or the survey \cite{Sir-BCC}) that fully regular maps of a given type $(k,\ell)$ can be identified with torsion-free normal quotients of the full $(2,k,\ell)$-triangle group with presentation
\begin{equation}\label{eq:Tkl}
\langle \; R_0,\, R_1,\, R_2 \, | \, R_0^2,\, R_1^2,\, R_2^2,\, (R_0R_2)^2,\ (R_1R_2)^k,\, (R_0R_1)^\ell \, \rangle\ 
\end{equation}
This group is isomorphic to the full automorphism group of a {\em universal $(k,\ell)$-tessellation} of a simply connected surface, where the tiles are regular $l$-gons, and the valency of the underlying graph is $k$. One usually considers a universal $(k,\ell)$-tessellation also as a geometrically regular tiling of a sphere, a Euclidean plane, or a hyperbolic plane, depending on whether the value of $1/k + 1/\ell$ is greater than, equal to, or smaller than $1/2$.
\smallskip

As alluded to in the introduction, biregular maps arise as smooth, that is torsion-free, normal quotients of subgroups of the full $(2,k,\ell)$-triangle groups of index two. Any such subgroup is completely determined by a non-empty subset of $\{R_0, R_1, R_2\}$, consisting of the involutory generators {\em not} contained in the subgroup. There are $7$ such subsets, and the number of resulting subgroups of index two depends on the parities of $k$ and $\ell$. The index two subgroup avoiding each of $R_0,R_1,R_2$ occurs in every one of the triangle groups, regardless of parity of $k$ and $\ell$. If both $k$ and $\ell$ are odd then this is the only subgroup of index two in the full $(2,k,\ell)$-triangle group. If exactly a given one of $k,\ell$ is even, there are two further index two subgroups, and so including the aforementioned subgroup, $3$ such subgroups in total. If both $k$ and $\ell$ are even, the full $(2,k,\ell)$-triangle group contains $7$ subgroups of index two.
\smallskip

The subgroup of \eqref{eq:Tkl} of index two containing none of the three involutory generators, informally denoted $\langle \overline{R}_0, \overline{R}_1, \overline{R}_2\rangle$ here, is known as the (ordinary) $(2,k,\ell)$-triangle group comprising all orientation-preserving automorphisms of a universal $(k,\ell)$-tessellation. It is generated, for instance, by the products $U=R_2R_1$ and $V=R_1R_0$ representing rotations of the tessellation about a vertex and about the centre of a face incident to the vertex, and has (irrespective of parity of $k,\ell$) a presentation of the form
\begin{equation}\label{eq:Tklor}
\langle \overline{R}_0,\, \overline{R}_1,\, \overline{R}_2\,\rangle = \langle U,\, V\, |\, U^k,\, V^\ell,\, (UV)^2\,\rangle
\end{equation}
Smooth normal quotients of this subgroup give rise to orientably-regular maps in the terminology of \cite{Sir-BCC}, also known as rotary maps in \cite{Wil}. The second type of biregular maps mentioned in the introduction, the bi-rotary maps of \cite{BCS}, arise as smooth quotients of the index-two subgroup $\langle R_0,\, \overline{R}_1,\, \overline{R}_2\,\rangle$ of \eqref{eq:Tkl} containing $R_0$ but avoiding $R_1$ and $R_2$, for $\ell$ even. Using the notation $A=R_0$, $Z=R_1R_2$, and the square brackets for a commutator of two elements as usual, this subgroup admits a presentation
\begin{equation}\label{eq:TklAZ}
\langle R_0,\, \overline{R}_1,\, \overline{R}_2\,\rangle = \langle A,\,Z\, |\, A^2,\, Z^k,\, [A,Z]^{\ell/2}\, \rangle
\end{equation}

In this paper we will focus on biregular maps corresponding to torsion-free normal quotients of the subgroup $\langle R_0,\, \overline{R}_1,\, R_2\,\rangle$ of the full $(2,k,\ell)$-triangle group \eqref{eq:Tkl} for both $k$ and $\ell$ even, distinguished by inclusion of $R_0$ and $R_2$ and avoidance of $R_1$. It may be checked that the subgroup $\langle R_0,\, \overline{R}_1,\, R_2\,\rangle$ is the only one among the seven index-two subgroups of \eqref{eq:Tkl} inducing {\em two} edge orbits on a universal $(k,\ell)$-tessellation for $k,\ell$ even. With the help of new variables $X=R_0$, $Y=R_2$, $S=R_1R_0R_1$ and $T=R_1R_2R_1$ it can be shown that a complete presentation of this subgroup is
\begin{equation}\label{eq:XYST}
\langle R_0,\, \overline{R}_1,\, R_2\,\rangle = \langle \, X,\, Y,\, S,\, T\, | \, X^2,\, Y^2,\, S^2,\, T^2,\, (XY)^2,\, (ST)^2,\, (TY)^{k/2},\, (SX)^{\ell/2}\, \rangle
\end{equation}
If $N$ is a torsion-free normal subgroup of $\langle R_0,\, \overline{R}_1,\, R_2\,\rangle$ of finite index, the corresponding quotient group $H=\langle R_0,\, \overline{R}_1,\, R_2\,\rangle/N$ has a presentation of the form
\begin{equation}\label{eq:H}
H = \langle \, x,\, y,\, s,\, t\, | \, x^2,\, y^2,\, s^2,\, t^2,\, (xy)^2,\, (st)^2,\, (ty)^{k/2},\, (sx)^{\ell/2},\, \ldots \, \rangle
\end{equation}
with $x=XN$, $y=YN$, $s=SN$ and $t=TN$. By the correspondence between maps and groups, any finite group $H$ together with a presentation as in \eqref{eq:H} is a group of automorphisms of a finite biregular map, the biregularity of which is derived from the index-two subgroup $\langle R_0,\, \overline{R}_1,\, R_2\,\rangle$ of \eqref{eq:Tkl}. Maps exhibiting this kind of biregularity were referred to in \cite{Jea} as {\em edge-biregular}.
\smallskip

The obvious motivation for this terminology comes from the already mentioned two-orbit action of the subgroup $\langle R_0,\, \overline{R}_1,\, R_2\,\rangle$ on edges of a universal $(k,\ell)$-tessellation. By taking a torsion-free normal quotient as in \eqref{eq:H} this two-orbit action projects onto edges of the quotient map as shown in Fig. \ref{fig1}.
\smallskip

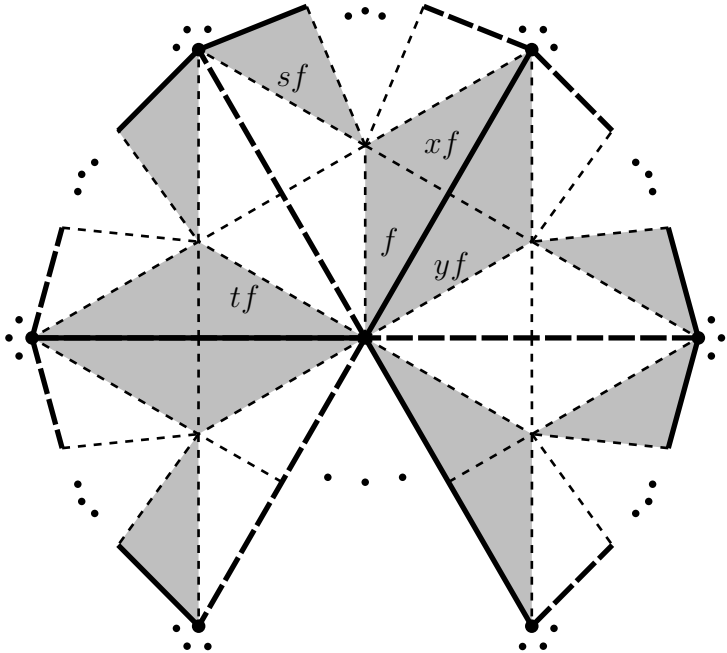
\begin{figure}[h]
\centering
\begin{tikzpicture}
[scale=0.64]

 \fill [gray, opacity=0.5] (-150:4) -- (-120:2*3.46) -- (-140:6.7) -- (-150:4) ;
    
    \fill[gray, opacity=0.5] (0,0) -- (30:4) -- (60:2*3.46) -- (90:4) -- (0,0);
    
  \fill[gray, opacity=0.5] (0,0) -- (150:4) -- (-180:2*3.46) -- (-150:4) -- (0,0);
  
  \fill [gray, opacity=0.5] (0,0) -- (-30:4) -- (-60:2*3.46) -- (0,0);
   
   \fill[gray, opacity=0.5] (150:4) -- (120:2*3.46) -- (140:6.7) -- (150:4) ;
    
    \fill [gray, opacity=0.5] (100:7) -- (120:2*3.46) -- (90:4) -- (100:7);
    
\fill [gray, opacity=0.5] (0:2*3.46) -- (20:6.7) -- (30:4);
\fill [gray, opacity=0.5] (0:2*3.46) -- (-20:6.7) -- (-30:4);

  \draw [line width=2pt, dash pattern={on 10pt off 2pt}] (2*3.46,0) -- (0,0) (0,0) -- (120:2*3.46);
 \draw [line width=2pt] (-2*3.46,0) -- (0,0) (0,0) -- (60:2*3.46);

\draw [dashed, line width=1pt] (90:4) -- (30:4);
\draw [dashed, line width=1pt] (90:4) -- (150:4);

\draw [dashed, line width=1pt] (0:2*1.73) -- (30:4);

\draw [dashed, line width=1pt] (180:2*1.73) -- (150:4);

\draw [dashed, line width=1pt] (90:4) -- (80:7);
\draw [dashed, line width=1pt] (90:4) -- (100:7);
 \draw [line width=2pt] (120:2*3.46) -- (100:7);
  \draw [line width=2pt, dash pattern={on 10pt off 2pt}] (60:2*3.46) -- (80:7) ;

\draw [dashed, line width=1pt] (30:4) -- (40:6.7);
 \draw  [line width=2pt, dash pattern={on 10pt off 2pt}] (60:2*3.46) -- (40:6.7);
 \draw  [line width=2pt] (120:2*3.46) -- (140:6.7) ;

 \draw[dashed, line width=1pt] (150:4) -- (120:2*3.46) -- (140:6.7) -- (150:4) ;

  \draw [dashed, line width=1pt] (0,0) -- (30:4) -- (60:2*3.46) -- (90:4) -- (0,0);

 \draw [dashed, line width=1pt]  (30:4) -- (0:2*3.46) (0,0) -- (-30:4) (-30:4)--(0:2*3.46) (-30:4)--(-60:2*3.46) (90:4)--(120:2*3.46)  (-150:4)--(-120:2*3.46);

  ***

\draw [line width=2pt, dash pattern={on 10pt off 2pt}] (-2*3.46,0) -- (0,0) (0,0) -- (-120:2*3.46);
 \draw [line width=2pt] (0,0) -- (-60:2*3.46);

\draw [dashed, line width=1pt] (-60:3.46) -- (-30:4);
\draw [dashed, line width=1pt] (-120:3.46) -- (-150:4);

\draw [dashed, line width=1pt] (0:2*1.73) -- (-30:4);
  \draw [dashed, line width=1pt] (0,0) -- (150:4) -- (-180:2*3.46) -- (-150:4) -- (0,0);

\draw [dashed, line width=1pt] (-180:2*1.73) -- (-150:4);

\draw [dashed, line width=1pt] (-30:4) -- (-40:6.7);
\draw [dashed, line width=1pt] (-150:4) -- (-140:6.7);
 \draw  [line width=2pt, dash pattern={on 10pt off 2pt}] (-60:2*3.46) -- (-40:6.7);
 \draw  [line width=2pt] (-120:2*3.46) -- (-140:6.7) ;

   \draw [line width=2pt, dash pattern={on 10pt off 2pt}] (-2*3.46,0) -- (160:6.7) (-2*3.46,0) -- (-160:6.7);
 \draw [line width=2pt] (2*3.46,0) -- (20:6.7) (2*3.46,0) -- (-20:6.7) ;

\draw [dashed, line width=1pt] (-20:6.7) -- (330:4);
\draw [dashed, line width=1pt] (20:6.7) -- (30:4);

\draw [dashed, line width=1pt] (160:6.7) -- (150:4);
\draw [dashed, line width=1pt] (200:6.7) -- (210:4);

   \fill (0,0) circle (4pt);
   \foreach \x in {0,...,5}
   \fill (60*\x:2*3.46) circle (4pt);

   \draw (270:3) node[circle,fill,inner sep=1pt]{};
  \draw (255:3) node[circle,fill,inner sep=1pt]{};
  \draw (285:3) node[circle,fill,inner sep=1pt]{};

     \foreach \x in {-2,...,2}
   \draw (90+60*\x:6.8) node[circle,fill,inner sep=1pt]{}
   (87+60*\x:6.7) node[circle,fill,inner sep=1pt]{}
 (93+60*\x:6.7) node[circle,fill,inner sep=1pt]{};

   \foreach \x in {0,...,5}
   \draw (60*\x:7.4) node[circle,fill,inner sep=1pt]{}
  (3+60*\x:7.2) node[circle,fill,inner sep=1pt]{}
(-3+60*\x:7.2) node[circle,fill,inner sep=1pt]{};

 \draw (0.5,2) node {$f$};
   \draw (1,4) node[above, right] {$xf$};
    \draw (1.2,1.5) node[right] {$yf$};
     \draw (-1.5,5.4) node {$sf$};
     \draw (-2.5,0.8) node {$tf$};

 \end{tikzpicture}
\caption{The images under $x,y,s,t$ of the distinguished flag $f$ of an edge-biregular map}
\label{fig1}
\end{figure}

In order to introduce a `reference system' within the map on the diagram one may fix and shade an arbitrarily chosen flag $f$, let $x$ and $y$ represent the reflections of $H$ in the sides of $f$ as depicted, and call the quadruple of flags $f$, $xf$, $yf$ and $xyf$ surrounding an edge `distinguished'. The pattern of the shaded flags in this diagram demonstrate how the group of automorphisms $H$ of an edge-biregular map `spreads around' the distinguished quadruple and hence acts transitively on faces and vertices. At the same time, Fig. \ref{fig1} displays two orbits of $H$ on edges, those which are bold in one orbit, with dashed edges indicating the other orbit; notice also the two $H$-orbits on flags formed by quadruples of shaded and unshaded flags surrounding respectively bold and dashed edges.
\smallskip

Observe that it is a choice, consistent with the choice of notation, which orbit of edges is coloured bold, and which is dashed. As shown in Fig. \ref{fig1}, the generators $x$ and $y$ correspond to automorphisms acting locally as reflections respectively along and across the distinguished bold edge. Meanwhile $s$ and $t$ correspond to reflections respectively along and across the distinguished dashed edge, that is the dashed edge which is surrounded by the quadruple of unshaded flags sharing a boundary with $f$. We call this, and the corresponding presentation \eqref{eq:H}, the {\em canonical form} of the edge-biregular map $M$ and denote it $M = (H; x,y,s,t)$. A pair of edge-biregular maps $M=(H;x,y,s,t)$ and $M'=(H';x',y',s',t')$ given in the canonical form are {\em isomorphic} if there is a group isomorphism $H\to H'$ taking $z$ onto $z'$ for every $z\in \{x,y,s,t\}$.
\smallskip

Whether an edge-biregular map $M=(H;x,y,s,t)$ is also fully regular will depend on the existence (or otherwise) of an involutory automorphism of $M$ lying outside $H$ and fusing the two $H$-orbits of edges together. By the above, this condition is equivalent to the existence of an automorphism of the {\em group} $H$ which interchanges $x$ with $s$ and $y$ with $t$. The map $M= (H; x,y,s,t)$ is thus fully regular if and only if $M$ and $M' = (H; s,t,x,y)$ are isomorphic as maps, otherwise we say these maps are {\sl twins}.
\smallskip

To avoid unnecessary work, all our forthcoming results will be up to duality. The dual map $M^*$ of an edge-biregular map $M = (H; x,y,s,t)$ is also an edge-biregular map and is formed by interchanging $x$ with $y$ and $s$ with $t$ in the presentation \eqref{eq:H} for $H$, thereby swapping the vertices with the faces and vice versa, to give $M^* = (H; y,x,t,s)$. If $M$ is isomorphic to $M^*$ the map is {\em self-dual}, which (by the map isomorphism condition) is equivalent to the existence of an automorphism of the {\em group} $H$ swapping $x$ with $y$ and $s$ with $t$.
\smallskip

Except for the case of characteristic $-2$, all our edge-biregular maps $M=(H;x,y,s,t)$ will be carried by non-orientable surfaces. Since each canonical generator of $H$ corresponds to a reflection on the carrier surface, it follows that $M$ is supported by an orientable surface if and only if every relator in the presentation of $H$ has an even length in terms of $x,y,s,t$. This is easily seen to be equivalent to the statement that the carrier surface of $M$ is non-orientable if and only if $H$ is generated by any three products, of two involutions each, provided that every involution out of $x,y,s,t$ appears in at least one product.
\smallskip

Having assumed that our edge-biregular maps $M=(H;x,y,s,t)$ arise from the index-two subgroup $\langle R_0,\, \overline{R}_1,\, R_2\,\rangle$ of full triangle groups by torsion-free normal subgroups implies that all the four canonical involutory generators of $H$ are {\em distinct}. For completeness we address also the situations when some of the generators in the set $\{x,y,s,t\}$ are either trivial or equal to each other. These are commonly known as `degeneracies' and have been treated in detail in \cite{Jea}; they give rise to maps on surfaces with boundary or to maps with semi-edges or to maps of spherical type with $2 \in \{k,l\}$. Recall that a semi-edge has one of its endpoints attached to a vertex but the other one is `dangling' and not incident to any vertex; we will regard edges and semi-edges as different objects.
\smallskip

While we are not considering maps on surfaces with boundary here, edge-biregular maps with semi-edges are relevant for us. Leaving the trivial case of a semi-star (a spherical one-vertex map with some number of attached semi-edges) aside, suppose that an edge-biregular map of type $(k,\ell)$ with both entries even contains a semi-edge as well as an edge. By \cite{Jea}, edges and semi-edges must then alternate around each vertex on the carrier surface of the map, and consequently the deletion of all semi-edges results in a fully regular map of type $(k/2,\ell/2)$. Conversely, every edge-biregular map containing both an edge and a semi-edge arises from a fully regular map that is not a semi-star by inserting a semi-edge into each corner.
\smallskip

To arrive at an algebraic explanation of this edge/semi-edge phenomenon in edge-biregular maps, consider such a map $M=(H; x,y,s,t)$ in a canonical form and assume that all the dashed edges in Fig. \ref{fig1} would collapse to semi-edges. This is equivalent to identifying the flags marked $sf$ and $tf$ into a single flag, which means regarding $s$ and $t$ as identical automorphisms. It now follows from \eqref{eq:H} that, up to twinness, an edge-biregular map $M=(H; x,y,s,t)$ of type $(k,\ell)$ for even $k$ and $\ell$, containing both edges and semi-edges, may be identified with its group of automorphisms $H$ presented in the form
\begin{equation}\label{eq:Hsemi}
H = \langle \, x,\, y,\, s,\, t\, | \, x^2,\, y^2,\, s^2,\, t^2,\, (xy)^2,\, st,\, (ty)^{k/2},\, (sx)^{\ell/2},\, \ldots \, \rangle\ 
\end{equation}
Note the difference from \eqref{eq:H} in the power at the product $st$, implying $s=t$ in \eqref{eq:Hsemi}. Conversely, let a fully regular map $M$ of type $(k/2,\ell/2)$ for $k,\ell$ even and distinct from a semi-star be given by its (full) automorphism group, that is, by a torsion-free normal quotient of \eqref{eq:Tkl} presented in the form
\begin{equation}\label{eq:Tkl-r}
\langle \; r_0,\, r_1,\, r_2 \, | \, r_0^2,\, r_1^2,\, r_2^2,\, (r_0r_2)^2,\ (r_1r_2)^{k/2},\, (r_0r_1)^{\ell/2} \, \ldots \, \rangle\ 
\end{equation}
Then, a presentation of the group $H$ as in \eqref{eq:Hsemi} for the corresponding edge-biregular map of type $(k,\ell)$ arising from $M$ by inserting a semi-edge into every corner of $M$ is obtained from the one in \eqref{eq:Tkl-r} simply by letting $x=r_0$, $y=r_2$, and $s=t=r_1$.
\smallskip

\section{Results, and proofs setup}\label{sec:res}

In this section we state our classification results for edge-biregular maps on surfaces of negative prime Euler characteristic. We note that a classification of such maps on surfaces with a non-negative Euler characteristic (a sphere, a projective plane, a torus and a Klein bottle) was derived in \cite{Jea}.
\smallskip

As already alluded to, edge-biregular maps on surfaces without boundary split into two families, according to whether they contain semi-edges or not. Also, for the purpose of this section we may disregard the semi-star maps. Let us first consider edge-biregular maps containing both edges as well as semi-edges. By the facts summed up at the end of the previous section, edge-biregular maps of type $(k,\ell)$ for even $k$ and $\ell$ containing both edges as well as semi-edges and carried by a particular surface are in a one-to-one correspondence with fully regular maps of type $(k/2,\ell/2)$ on the same surface. It follows that a classification of all fully regular maps on a particular surface automatically implies a classification of edge-biregular maps with semi-edges on the same surface, including presentations of the corresponding groups of automorphisms. And since a classification of fully regular maps on surfaces of negative prime Euler characteristic is available from \cite{BNS}, a corresponding classification of edge-biregular maps with semi-edges on these surfaces follows by the outlined procedure. In the interest of saving space we will not go into any further detail and we also omit a formal statement of the related presentations, referring the reader to \cite{BNS} or to \cite{Sir-BCC}.
\smallskip

We now pass onto classification of edge-biregular maps without semi-edges on surfaces with negative prime Euler characteristic. From the work in \cite{Dua} and \cite{Jea}, we know that if such a map $M=(H;x,y,s,t)$ is given in a canonical form with the group of automorphisms $H$ presented as in \eqref{eq:H}, then all the four generators are {\em non-trivial} and {\em distinct}. We state our main results separately for odd primes $p$ (in which case, of course, all the maps are carried by non-orientable surfaces) and then for $p=2$.

\begin{thm}\label{thm:main-odd}
A surface of negative odd prime Euler characteristic $\chi = -p$ supports, up to duality and twin maps, only the following pairwise non-isomorphic edge-biregular maps $M = (H;x,y,s,t)$ with no semi-edges, given in their canonical forms:
\begin{enumerate}
\item
A single-vertex map of type $(4(p+1),4)$ with $H=H_{p(1)}= \langle y,t \rangle$ isomorphic to the dihedral group of order $4(p+1)$ with canonical presentation
\begin{equation*}
H_{p(1)}= \langle \, x,y,s,t \, | \, x^2,\,y^2,\,s^2,\,t^2,\,(xy)^2,\,(st)^2,\, xys, \, s(yt)^{p+1}\rangle
\end{equation*}
\item
A two-vertex map of type $(2(p+2),4)$ with $H= H_{p(2)}=\langle x,t \rangle$ isomorphic to the dihedral group of order $4(p+2)$ and canonical presentation
\begin{equation*}
H_{p(2)} = \langle \, x,y,s,t \, | \, x^2,\,y^2,\,s^2,\,t^2,\,(xy)^2,\,(st)^2,\, xys, \, s(xt)^{p+2}\rangle
\end{equation*}
\item
The maps $M_{p,j}$ of type $(k, \ell)$ where $k=4\kappa$ and $\ell=2\lambda$ for odd and relatively prime $\kappa$ and $\lambda$, such that $p=2\kappa\lambda -2\kappa -\lambda$, where $H=H_{p,j}$ is of order $kl/2$ and presented as follows, for any positive integer $j < \lambda$ such that $j^2\equiv 1$ {\rm mod} $\lambda$, with $a=(j-1)(\lambda+1)/2$, and $u=sx$, $v=ty$:
\begin{equation*}
H_{p,j} = \langle \, x,y,s,t \, | \, x^2,\,y^2,\,s^2,\,t^2,\,(xy)^2,\,(st)^2,\,u^{\lambda},\,
v^{2\kappa},\, [s,v^2],\,[x,v^2],\, tutu^j,\, v^\kappa u^a s\, \rangle
\end{equation*}
\item
When $p\equiv 5$ {\rm mod} $9$, that is when $p=9m-4$ we have a map of type $(8,6m)$ and the presentation of the corresponding group $H=H_p$ which has order $24m$ is of the form
\begin{equation*}
H_p = \langle x,y,s,t\, | \, x^2,\,y^2,\,s^2,\,t^2,\,(xy)^2,\,(st)^2,\,(sx)^{3m},\,(ty)^4,\, (sxy)^2t,\,txty \rangle
\end{equation*}
\item
When $p=3$, we have a map of type $(4,6)$ and the corresponding group $H=H_{(3)} \cong D_6 \times D_6$ has canonical presentation
\begin{equation*}
H_{(3)} = \langle x,y,s,t\, | \, x^2,\,y^2,\,s^2,\,t^2,\,(xy)^2,\,(st)^2,\,(ty)^2,\,(sx)^3,\, (xyt)^3, \, (sty)^3, \, (xyst)^2  \rangle
\end{equation*}
\end{enumerate}
Moreover, the only fully regular map in the above list is $H_{(3)}$.
\end{thm}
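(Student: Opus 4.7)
The plan is to convert the Euler-characteristic constraint into a bounded Diophantine problem and then carry out a group-theoretic analysis of the resulting candidate types $(k,\ell)$. First, since $H$ acts transitively on vertices with dihedral stabiliser $\langle y,t\rangle$ of order $k$, on faces with dihedral stabiliser $\langle x,s\rangle$ of order $\ell$, and with Klein-four edge stabilisers of order $4$, the Euler characteristic identity reads
\[
-p \;=\; |H|\!\left(\tfrac1k + \tfrac1\ell - \tfrac12\right),
\]
which, writing $k = 2k'$ and $\ell = 2\ell'$, rearranges to $|H|\,d = 2p\,k'\ell'$ with $d = k'\ell' - k' - \ell'$. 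Since $\gcd(d,k') = \gcd(d,\ell') = \gcd(k',\ell')$ and $p$ is an odd prime, a short number-theoretic case split shows that in the coprime case $d \in \{1,2,p,2p\}$, equivalently $(k'-1)(\ell'-1) \in \{2,3,p+1,2p+1\}$. The non-coprime configurations force very restrictive values of $p$ and feed into the exceptional case (4).

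Next, each candidate is treated individually. For $d = 1$ the forced pair $\{k',\ell'\} = \{2,3\}$ gives $p = 3$ and a direct construction identifies $H_{(3)} \cong D_6 \times D_6$, yielding case (5). For $d = 2$, the pair $\{2,4\}$ makes $s$ and $x$ commute via $(sx)^2 = 1$; combined with $xys = 1$ and the remaining relations, this forces $s = t$, which introduces semi-edges contrary to hypothesis. For $d = 2p$, only $\{2,2p+1\}$ survives, the collapse $s = xy$ occurs, and $H = \langle y,t\rangle$ becomes a two-generator dihedral group of the asserted order $4(p+1)$, i.e.\ case (1). For $d = p$ the factorisation $(k'-1)(\ell'-1) = p+1$ splits into three sub-types: the trivial pair $\{1,p\}$ gives case (2); the coprime odd pair $\{\kappa,\lambda\}$ with $(2\kappa-1)(\lambda-1) = p+1$ gives the family $M_{p,j}$ of case (3); and the exceptional pair $k'=4$, $\ell'=3m$ with $p=9m-4$ gives the isolated case (4).

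The main obstacle is case (3). Setting $u = sx$ of order $\lambda$ and $v = ty$ of order $2\kappa$, the Klein-four identities $(xy)^2 = (st)^2 = 1$ together with the dihedral structure of $\langle x,s\rangle$ and $\langle y,t\rangle$ force $[x,v^2] = [s,v^2] = 1$ and a twist relation of the form $tut = u^{-j}$ for some $j$ with $j^2 \equiv 1 \pmod \lambda$. A further order-compatibility count then pins down the closing relation $v^\kappa u^a s = 1$ with $a = (j-1)(\lambda+1)/2$. One must verify that every admissible $j$ actually yields a group of the asserted order $k\ell/2$, best done by an explicit realisation of $H_{p,j}$ as an iterated semidirect product of $\langle v\rangle \cong \mathbb Z_{2\kappa}$ by the dihedral group $\langle u,s\rangle$, and that distinct admissible $j$ modulo the natural duality $j \mapsto \lambda - j$ give pairwise non-isomorphic maps.

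Finally, for the moreover clause I would use the criterion that $M$ is fully regular iff $H$ admits an automorphism $\phi$ with $\phi(x) = s$, $\phi(y) = t$, $\phi(s) = x$, $\phi(t) = y$. In cases (1) and (2), the asymmetric relation $xys = 1$ expresses $s$ as a word in $x,y$, yet the image $stx = 1$ fails in $H$ by a direct dihedral computation showing $stx$ to be a non-trivial rotation; hence no such $\phi$ exists. In case (3) the twist relation $tut = u^{-j}$ encodes the action of $t$ on the cyclic subgroup $\langle u\rangle$, with no analogue governing the action of $y$ on $u$, and this asymmetry rules out $\phi$. In case (4), the relations $(sxy)^2 t = 1$ and $txty = 1$ are manifestly asymmetric under the swap and do not follow from their images, ruling out $\phi$. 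In case (5), the defining relations of $H_{(3)}$ are invariant under $(x,y,s,t) \mapsto (s,t,x,y)$, so $\phi$ is a legitimate group automorphism, realised geometrically as the exchange of the two direct factors in $D_6 \times D_6$.
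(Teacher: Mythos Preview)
Your number-theoretic reduction is sound as far as it goes, but it only determines the admissible \emph{types} $(k,\ell)$; it does not, on its own, determine which groups $H$ actually realise a given type on a surface of characteristic $-p$, and this is where the real work lies. Two concrete errors illustrate the gap. First, the case $d=1$ forces the type $(4,6)$ but certainly does \emph{not} force $p=3$: the Euler identity then reads $|H|=12p$, valid for every odd prime $p$, and you must rule out all primes $p\ge 5$. The paper does this by showing (via Sylow and Schur--Zassenhaus) that for large $p$ one would have $H\cong C_p\rtimes D_{12}$, and then proving a separate exclusion lemma that no edge-biregular map of type $(4,6)$ can have such a group; the small primes $5,7,11$ need individual treatment. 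None of this is a ``direct construction''. Second, in your $d=2$ analysis you invoke the relation $xys=1$, but that relation is special to the dihedral families in items (1)--(2) and is not available in a generic edge-biregular presentation; your claimed collapse to semi-edges therefore has no basis.

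There is also a bookkeeping error: item (1), of type $(4(p+1),4)$, has $k'=2(p+1)$ and $\ell'=2$, so $\gcd(k',\ell')=2$ and you are in the non-coprime regime, not the coprime case $d=2p$ where you placed it. More structurally, the paper's dichotomy is $p\mid |H|$ versus $p\nmid |H|$ (which corresponds roughly to your $d\in\{1,2\}$ versus $d\in\{p,2p\}$), and the second branch is handled not by ad hoc relation chasing but by proving $H$ solvable (as a product $D_kD_\ell$, via Huppert), analysing its Fitting subgroup $F$, and splitting according to whether $F$ is cyclic (yielding item (3)) or has dihedral $2$-part (yielding item (4)); the dihedral-$H$ families (1) and (2) are imported from the separate classification in \cite{Jea}. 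Your sketch for item (3) gestures at the right relations but does not supply the structural input (normality of $\langle u\rangle$ and $\langle v^2\rangle$, coming from the Fitting analysis) that makes them derivable, and your treatment of item (4) does not explain why $k=8$ is forced. In short, the Diophantine skeleton is fine, but the group-theoretic flesh --- exclusion of $C_p\rtimes D_\nu$, solvability of $H$, and the Fitting-subgroup case analysis --- is missing, and without it the argument does not close.
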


\begin{thm}\label{thm:main-even}
A surface of Euler characteristic $\chi = -2$ supports, up to duality and twin maps, only the following non-isomorphic edge-biregular maps $M = (H;x,y,s,t)$ with no semi-edges:
\begin{enumerate}
\item
One self-dual map of type $(8,8)$ with a single vertex and a single face, with the group $H=H_{2,1}$ presented as
\[ H_{2,1} = \langle \; x, y, s, t \; |  \; x^2, y^2, s^2, t^2, (xy)^2, (st)^2, (ty)^{4}, (sx)^{4}, ysxs, txsx, xtyt, syty \rangle \cong D_8  \]
\item
One map of type $(4,12)$ with three vertices and a single face, with the group $H=H_{2,2}$ having canonical presentation
\[ H_{2,2} = \langle \; x, y, s, t \; |  \; x^2, y^2, s^2, t^2, (xy)^2, (st)^2, (ty)^{2}, (sx)^{6}, xyt, t(sx)^3 \rangle \cong D_{12}  \]
\item
One self-dual map of type $(6,6)$ with two vertices and two faces, with the group $H=H_{2,3}$ being as follows
\[ H_{2,3} = \langle \; x, y, s, t \; |  \; x^2, y^2, s^2, t^2, (xy)^2, (st)^2, (ty)^{3}, (sx)^{3}, styx \rangle \cong D_{12}  \]
\item
Six maps of type $(4,8)$ with four vertices and two faces, with groups $H_{2,i}\cong D_8\times C_2$ for $i=4,5,6,7,8,9$ presented in the form
\begin{align*}
H_{2,4}&= \langle \; x, y, s, t \; |  \; x^2, y^2, s^2, t^2, (xy)^2, (st)^2, (ty)^{2}, (sx)^{4}, (ys)^4, (tx)^2, 
ysxs \rangle  \\
H_{2,5}&= \langle \; x, y, s, t \; |  \; x^2, y^2, s^2, t^2, (xy)^2, (st)^2, (ty)^{2}, (sx)^{4}, (ys)^2, (tx)^2, y(sx)^2 \rangle \\
H_{2,6}&= \langle \; x, y, s, t \; |  \; x^2, y^2, s^2, t^2, (xy)^2, (st)^2, (ty)^{2}, (sx)^{4}, (ys)^2, (stx)^2, y(sx)^2 \rangle \\
H_{2.7}&= \langle \; x, y, s, t \; |  \; x^2, y^2, s^2, t^2, (xy)^2, (st)^2, (ty)^{2}, (sx)^{4}, (ys)^2, (tx)^2, 
ty(sx)^2 \rangle \\
H_{2.8}&= \langle \; x, y, s, t \; |  \; x^2, y^2, s^2, t^2, (xy)^2, (st)^2, (ty)^{2}, (sx)^{4}, (ys)^2, (stx)^2, 
tyxsx \rangle \\
H_{2.9}&= \langle \; x, y, s, t \; |  \; x^2, y^2, s^2, t^2, (xy)^2, (st)^2, (ty)^{2}, (sx)^{4}, (ys)^2, (stx)^2, 
tys \rangle 
\end{align*}
\item

Three maps of type $(4,6)$ with six vertices and four faces, with groups of automorphisms $H_{2,i}$ for $i=10,11,12$ given by
\begin{align*}
H_{2,10}&= \langle \; x, y, s, t \; |  \; x^2, y^2, s^2, t^2, (xy)^2, (st)^2, (ty)^{2}, (sx)^{3}, t(sy)^2, y(xt)^2 \rangle \cong S_4 \\
H_{2,11}&= \langle \; x, y, s, t \; |  \; x^2, y^2, s^2, t^2, (xy)^2, (st)^2, (ty)^{2}, (sx)^{3}, (ysx)^2,(tx)^2 \rangle \cong D_6\times V_4 \\
H_{2,12}&= \langle \; x, y, s, t \; |  \; x^2, y^2, s^2, t^2, (xy)^2, (st)^2, (ty)^{2}, (sx)^{3}, (ys)^2,
(tx)^2 \rangle \cong D_6\times V_4
\end{align*}
\end{enumerate}
The maps supported by the groups $H_{2,1}$, $H_{2,3}$, $H_{2,4}$, $H_{2,7}$, $H_{2,11}$ and $H_{2,12}$ are orientable while the other six are not, and the only fully regular ones out of the twelve are those supported by the groups $H_{2,1}$, $H_{2,3}$, $H_{2,7}$, $H_{2,10}$ and $H_{2,12}$.
\end{thm}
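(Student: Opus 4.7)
\medskip

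\noindent\textbf{Proof plan.} The approach starts from the Euler--Poincar\'e formula applied to the canonical presentation \eqref{eq:H}. In an edge-biregular map of type $(k,\ell)$ the two $H$-orbits on flags each have size $|H|$, hence the map has $|H|/2$ edges; the vertex stabiliser is a dihedral subgroup of order $k$ and the face stabiliser a dihedral subgroup of order $\ell$, so $V=|H|/k$ and $F=|H|/\ell$. The condition $V-E+F=-2$ then reads
\[
|H|\left(\tfrac12 - \tfrac1k - \tfrac1\ell\right) = 2.
\]
With $k,\ell$ both even and at least $4$ and with $|H|\geq\max(k,\ell)$, a direct elementary check shows that, up to the duality $(k,\ell)\leftrightarrow(\ell,k)$, the only admissible parameter sets are
\[
(k,\ell;|H|) \in \{(8,8;8),\,(6,6;12),\,(4,12;12),\,(4,8;16),\,(4,6;24)\}.
\]

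\medskip

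For each admissible $(k,\ell)$ the second step is to determine all groups $H$ of the prescribed order admitting a canonical quadruple $(x,y,s,t)$ of distinct non-trivial involutions generating $H$ and satisfying $(xy)^2=(st)^2=1$, $(ty)^{k/2}=(sx)^{\ell/2}=1$, together with the torsion-freeness requirement that $xy,st,ty,sx$ have orders exactly $2,2,k/2,\ell/2$ (so that the corresponding quotient of the subgroup in \eqref{eq:XYST} yields a genuine map of type $(k,\ell)$ without singularities). Consulting the standard classifications for orders $8,12,16,24$ quickly eliminates all groups containing too few suitable involutions, leaving as candidates $D_8$ for $(8,8)$, $D_{12}$ for both $(6,6)$ and $(4,12)$, $D_8\times C_2$ for $(4,8)$, and $S_4$ and $D_6\times V_4$ for $(4,6)$. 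In each of these groups the remaining task is to enumerate the admissible quadruples and reduce them under the natural equivalences: $\mathrm{Aut}(H)$-conjugation (which gives isomorphic maps), the twin swap $(x,y,s,t)\mapsto(s,t,x,y)$, and the dual swap $(x,y,s,t)\mapsto(y,x,t,s)$. The surviving equivalence classes match the twelve maps $H_{2,1},\ldots,H_{2,12}$, and the extra relators recorded in the statement are chosen so that, jointly with the eight universal relators of \eqref{eq:H}, they present the group on the nose.

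\medskip

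For each resulting map I would then record $V=|H|/k$ and $F=|H|/\ell$ and verify the three finer properties: \emph{orientability} by checking whether every relator in \eqref{eq:H} has even length in the canonical generators (equivalently, whether $H$ has an index-two subgroup avoiding all four of $x,y,s,t$); \emph{full regularity} by looking in $\mathrm{Aut}(H)$ for an involution realising the twin swap; and \emph{self-duality} by searching for one realising the dual swap. These checks are short direct verifications inside the concrete small groups involved and yield the orientability and regularity claims at the end of the theorem.

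\medskip

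The main obstacle lies in the case $(k,\ell)=(4,8)$. Although $D_8\times C_2$ is the unique group of order $16$ admitting any canonical quadruple, it contains many involutions and correspondingly many candidate $4$-tuples satisfying the partial relator set $\{x^2,y^2,s^2,t^2,(xy)^2,(st)^2,(ty)^2,(sx)^4\}$. The work there is to enumerate these tuples up to inner automorphism and the twin and dual equivalences, to show that exactly six classes survive, and to isolate for each of $H_{2,4},\ldots,H_{2,9}$ a short list of additional relators that both hold in the selected quadruple and, together with the eight universal relators, force $|H|=16$. This bookkeeping through the conjugacy-class structure of involutions in $D_8\times C_2$ is expected to be the most intricate single step of the argument.
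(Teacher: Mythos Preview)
Your plan is sound and will arrive at the same twelve maps, but it proceeds differently from the paper. The paper never first pins down the abstract isomorphism type of $H$ and then enumerates quadruples inside it. Instead, for each admissible type it works structurally: for $(4,8)$ with $|H|=16$ the face-stabiliser $\langle s,x\rangle\cong D_8$ has index~$2$ and is therefore normal, its characteristic cyclic subgroup $\langle sx\rangle\cong C_4$ is then normal in $H$, and a short case split on whether conjugation by $y$ and by $t$ fixes or inverts $sx$ (together with whether $y$, $t$, or $yt$ lies in $D_8$) produces the six presentations directly; only afterwards are all six recognised as $D_8\times C_2$. For $(4,6)$ the paper splits on whether the Sylow $3$-subgroup is normal, obtaining $S_4$ in the non-normal branch and then running the same fix/invert analysis on $\langle sx\rangle\cong C_3$ in the normal branch. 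Your approach inverts the logic, which is perfectly legitimate and perhaps closer to a computer search, while the paper's normality-and-conjugation method is more uniform and avoids having to know the target group in advance.

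One point deserves more care than your phrase ``too few suitable involutions'' suggests. For order~$16$, the group $D_{16}$ contains $D_8$, is generated by involutions, and has plenty of them; what actually fails is that every quadruple satisfying the partial relator set generates only an index-$2$ subgroup. The same happens for $D_{24}$ at order~$24$: it contains $D_6$ and several copies of $V_4$, yet no admissible quadruple generates the whole group. So the elimination of competitors is not a matter of counting involutions but of checking generation, and those checks, while short, are genuine. The paper's structural route sidesteps this entirely; in your route you will need to carry them out before you can safely restrict to $D_8\times C_2$, $S_4$, and $D_6\times V_4$.
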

 
We make no claim of a lack of redundancy in these presentations - in fact quite the opposite: some of the above presentations have unnecessary relators which have been retained. This is deliberate in order to better demonstrate the interplay between the canonical generators, and also to make evident the presence, or absence, of full regularity or self-duality in the underlying map.

In order to set the stage for proofs in the next three sections, throughout we let $M=(H;x,y,s,t)$ be an edge-biregular map of type $(k,\ell)$ for even $k$ and $\ell$, with no semi-edges, and of characteristic $-p$ (meaning that the carrier surface of $M$ has Euler characteristic $\chi=-p$). All our working will be up to duality, so that we may without loss of generality assume that $k\le\ell$. We also know by \cite{Dua} and \cite{Jea} that in this situation the four generating involutions of the associated group of automorphisms $H$ are mutually distinct. Note that this instantly implies that $k,\ell\ge 4$; moreover, since maps of type $(4,4)$ necessarily live on surfaces with Euler characteristic $0$, we will assume that $k\ge 4$ and $\ell\ge 6$.
\smallskip

Referring to the diagram in Fig. \ref{fig1} which may be assumed to display a fragment of $M$, the stabiliser in $H$ of the distinguished face is $\langle s, x \rangle$, isomorphic to the dihedral group $D_\ell$ of order $\ell$, while the $H$-stabiliser of the distinguished vertex is $\langle t, y \rangle$, isomorphic to the dihedral group $D_k$ of order $k$. The map thus has $\frac{|H|}{\ell}$ faces and $\frac{|H|}{k}$ vertices. The $H$-stabiliser of the distinguished bold and dashed edge, respectively, is $\langle x,y \rangle$ and $\langle s,t \rangle$, in both cases isomorphic to the Klein four-group $V_4\cong C_2\times C_2$; hence the map has $\frac{2|H|}{4}$ edges. The Euler-Poincar\'e formula applied to the number of vertices, edges and faces of $M$ gives $|H| (\frac{1}{k} - \frac{1}{2}	+\frac{1}{\ell}) = \chi = -p$, or, equivalently,
\begin{equation}\label{eq:Eu}
|H| = \nu p \ \ \ \ {\rm where} \ \ \ \ \nu = \nu(k,\ell) = \frac{2k\ell}{k\ell-2(k+\ell)}
\end{equation}

It may be checked that our lower bounds $k\ge 4$ and $\ell\ge 6$ imply $\nu \le 12$. Combining this with \eqref{eq:Eu} one obtains the upper bound $|H| \le 12p$ for the group of automorphisms $H$ of the map $M$ under consideration. Also, by \eqref{eq:H} and distinctness of the involutory generators, the group $H$ contains two distinct subgroups isomorphic to $V_4$; in particular, the order of $H$ must be divisible by $4$.
\smallskip

From this point on we split the proof into four parts. In section \ref{sec:div} we will consider the case when $p$ divides the order of $H$, first addressing all odd primes, and then the special case when $p=2$. Section \ref{sec:nodiv} will deal with the opposite case, and the analysis will be divided according to whether the $2$-part Fitting subgroup of $H$ is cyclic or dihedral.

\section{The case when $p$ divides the order of $H$}\label{sec:div}

Let us recall that we are considering an edge-biregular map $M=(H;x,y,s,t)$ of type $k,\ell$ for even $k,\ell$ such that $k\le \ell$, $k\ge 4$, $\ell\ge 6$, carried by a surface of characteristic $-p$ for some prime $p$. Suppose $p$ is a divisor of $|H|$. This, together with \eqref{eq:Eu} and the facts summed up at the end of section \ref{sec:res}, implies that $|H|=\nu p$ where $\nu\in \{4,6,8,12\}$. Moreover, one may check that in our range for the even entries in the type of our biregular map $M$ one has:
$\nu(k,\ell)=12$ only for $(k,\ell)= (4,6)$;
$\nu(k,\ell)=8$ only for $(k,\ell)=(4,8)$;
$\nu(k,\ell)=6$ only when $p=2$ and for types $(6,6)$ and $(4,12)$;
and finally $\nu(k,\ell)=4$ only for $(k,\ell)$ equal to $(6,12)$ or $(8,8)$, corresponding to the cases when $p=3$ and $p=2$ respectively.

The case when $p=2$ is treated as a special case in the proof of Theorem \ref{thm:main-even} at the end of this section. Henceforth, unless stated otherwise, we assume that $p$ is an odd prime.
We sum up the above observations for odd $p$ in the form of a lemma.

\begin{lem}\label{lem:kl}
If $p$ is an odd prime which divides $|H|$, then either 
$|H|=12p$ for the type $(4,6)$, or
$|H|=8p$ for the type $(4,8)$,
or else $p=3$ and $|H|=4p$ for type $(6,12)$. \hfill $\Box$
\end{lem}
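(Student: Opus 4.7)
My plan is to convert the hypothesis $p \mid |H|$, together with the Euler--Poincar\'e relation $|H| = \nu p$ from \eqref{eq:Eu}, into strong arithmetic constraints on $\nu$, and then solve a short Diophantine enumeration for the permissible pairs $(k,\ell)$.

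First I would observe that if $p$ is an odd prime dividing $|H| = \nu p$, then $\nu$ itself must be a positive integer: writing $\nu = a/b$ in lowest terms, integrality of $|H| = ap/b$ forces $b \mid p$, and then $p \mid ap/b$ together with $\gcd(a,b)=1$ forces $b=1$. Combining this with the two facts recorded just before the lemma---the bound $\nu \le 12$ in the permitted range of types, and the divisibility $4 \mid |H|$ coming from the two distinct $V_4$ subgroups of $H$---and using that $p$ is odd (so that $4 \mid \nu p$ gives $4 \mid \nu$), I obtain $\nu \in \{4,8,12\}$.

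Next I would enumerate the even pairs $(k,\ell)$ with $4 \le k \le \ell$ and $\ell \ge 6$ realising each value, by rewriting $\nu(k\ell - 2k - 2\ell) = 2k\ell$ in factored form. Specifically, $\nu = 12$ becomes $(5k-12)(5\ell-12) = 144$, whose only even-in-range solution is $(k,\ell) = (4,6)$, giving $|H|=12p$; $\nu = 8$ becomes $(3k-8)(3\ell-8) = 64$, whose only even-in-range solution is $(k,\ell) = (4,8)$, giving $|H|=8p$; and $\nu = 4$ becomes $(k-4)(\ell-4) = 16$, whose even-in-range solutions are $(k,\ell) = (6,12)$ and $(k,\ell)=(8,8)$.

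Finally I would dispose of the spurious $(8,8)$ candidate: there $|H|=4p$, but the face stabiliser $\langle s,x\rangle \cong D_\ell = D_8$ has order $8$, so $8 \mid 4p$ would force $p=2$, contradicting the assumption that $p$ is odd. In the remaining case $(k,\ell)=(6,12)$ we again have $|H|=4p$, and now the face stabiliser $D_{12}$ of order $12$ forces $12 \mid 4p$, i.e.\ $p=3$, exactly as claimed. The entire argument is elementary; the only minor subtlety is to use the integrality of $\nu$ and the $4$-divisibility of $\nu$ simultaneously, since either constraint alone would leave too many possibilities open.
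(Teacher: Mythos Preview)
Your argument is correct and follows essentially the same route as the paper: reduce to $\nu$ being a small integer, enumerate the even types $(k,\ell)$ realising each value, and eliminate the spurious candidates by a divisibility check. The paper's version is terser (it simply asserts the enumeration and the values of $p$ for $(6,12)$ and $(8,8)$ without writing out the factorisations or the stabiliser-order argument), and it initially allows $\nu=6$ since it begins with general prime $p$ before specialising to odd $p$; your immediate use of $4\mid \nu$ from oddness of $p$ is a harmless streamlining.
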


Note that our previous considerations apply also to the situation when $\chi=-1$, that is, Lemma \ref{lem:kl} can be applied also to the exceptional case when $p=1$. In fact, a more detailed look into this case will be useful later. Remembering that we need four distinct involutory generators for $H$ if $|H|\in \{4,8,12\}$ this leaves us with only two options, namely, $|H| = 8$ when $(k,\ell) = (4, 8)$, and $|H| = 12$ when $(k,\ell) = (4, 6)$.
If $|H|=8$ then $\ell=8$ and hence $H \cong D_8$. If, on the other hand, $|H|=12$ then $\ell=6$, which means that $H$ contains a subgroup isomorphic to $D_6$. There is only one such group $H$ of order $12$, namely $D_{12}$. By the classification of \cite{Jea} each of these dihedral groups support an edge-biregular map on a surface with $\chi = -1$. Since we do not need more detailed information about such maps, we just state the following conclusion here:

\begin{lem}\label{lem:chi1}
If $M=(H;x,y,s,t)$ is an edge-biregular map on a surface of Euler characteristic $\chi = -1$, then $H$ is  isomorphic to a dihedral group of order $8$ or $12$. \hfill $\Box$
\end{lem}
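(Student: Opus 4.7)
The plan is to combine Lemma \ref{lem:kl} applied at the exceptional value $p=1$ with a brief enumeration of small groups. Substituting $p=1$ formally into the conclusion of Lemma \ref{lem:kl} leaves three candidate orders, $|H|\in\{4,8,12\}$, corresponding respectively to types $(6,12)$, $(4,8)$ and $(4,6)$. First I would dispose of $|H|=4$: the canonical relations $x^2=y^2=(xy)^2=1$ already force $\langle x,y\rangle\cong V_4$ of order $4$, so $|H|=4$ would give $H=\langle x,y\rangle$ and hence $\{s,t\}\subseteq\{x,y,xy\}$, contradicting the standing hypothesis recorded in Section~\ref{sec:res} that $x,y,s,t$ are four pairwise distinct non-trivial involutions.

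For $|H|=8$ with type $(4,8)$ the face-stabiliser $\langle s,x\rangle$ is dihedral of order $\ell=8$, so it already fills out $H$ and we obtain $H\cong D_8$ at once. The case $|H|=12$ with type $(4,6)$ reduces to the sub-problem of identifying which groups of order $12$ contain a subgroup isomorphic to $D_6$, because the face-stabiliser $\langle s,x\rangle\cong D_6$ has index two in $H$. Running through the five isomorphism classes of groups of order $12$, the two abelian ones $C_{12}$ and $C_6\times C_2$ cannot contain the non-abelian $D_6$; the alternating group $A_4$ has no subgroup of order $6$ at all; and the dicyclic group $\mathrm{Dic}_3\cong Q_{12}$ has a unique involution, whereas $D_6$ has three. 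This leaves $D_{12}$ as the only possibility, and it does contain $D_6$ as an index-two subgroup.

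Existence of at least one edge-biregular map on a surface of characteristic $-1$ with each of the two dihedral groups $D_8$ and $D_{12}$ as its automorphism group can then be quoted directly from the classification in \cite{Jea}, completing the argument. The only step requiring any care is the enumeration of groups of order $12$; the rest is purely substitutional. I do not anticipate any genuine obstacle, since the bound imposed by Lemma \ref{lem:kl} leaves only a handful of finite-order cases to inspect, and the $V_4\le\langle x,y\rangle\le H$ chain together with the dihedral face-stabiliser already pins down the isomorphism type in each surviving case.
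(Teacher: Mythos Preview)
Your proposal is correct and follows essentially the same line as the paper's own argument: apply the analysis behind Lemma~\ref{lem:kl} at the degenerate value $p=1$ to reduce to $|H|\in\{4,8,12\}$, eliminate $|H|=4$ via the distinctness of the four canonical involutions, read off $H\cong D_8$ from the face-stabiliser when $|H|=8$, and identify $D_{12}$ as the unique group of order~$12$ containing a copy of $D_6$, with existence supplied by~\cite{Jea}. Your treatment is slightly more explicit in ruling out the other groups of order~$12$, but the structure and ideas match the paper exactly.
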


We continue by showing that assuming $p\; | \; |H|$ leads to contradictions for $p\ge 13$. To do so we first exclude the existence of maps $M$ with types as above for which the group $H$ would be a semidirect product of a particular form.

\begin{lem}\label{lem:ex}
Let $p$ be an odd prime which does not divide $\kappa$ or $\lambda$ where $\lambda \geq 3$. Suppose $M=(H;x,y,s,t)$ is an edge-biregular map of type $(2\kappa, 2\lambda)$ such that $H \cong C_p \rtimes D_\nu$.
Then $\nu = 2\lambda \equiv 4$ mod $8$, and $\kappa \in \{ 2, \lambda\}$ while the supporting surfaces for these maps have $\chi = p(1-\lambda/2)$ and $\chi = p(2-\lambda)$ respectively.
\end{lem}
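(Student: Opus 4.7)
The plan is to work in the quotient $\bar H := H/C_p \cong D_\nu$, extract strong structural constraints on the images $\bar x, \bar y, \bar s, \bar t$ of the canonical generators, and then lift back to $H$ to enforce that $C_p$ actually arises in the generated subgroup. Since $p$ is odd and coprime to $2\kappa\lambda$, the four images are non-trivial involutions in $D_\nu$; moreover the products $\bar s \bar x$ and $\bar t \bar y$ have exact orders $\lambda$ and $\kappa$, since for any proper divisor $d$ the order of $(sx)^d \in C_p$ would have to divide $\gcd(p, \lambda/d) = 1$. With $\lambda \ge 3$, $\bar s \bar x$ is a non-trivial rotation, forcing $\bar s$ and $\bar x$ to be reflections. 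The Klein four-subgroup $\langle \bar x, \bar y \rangle$ then requires $4 \mid \nu$ and contains the unique central involution $R^{\nu/4}$ of $D_\nu$; the analogous statement holds for $\langle \bar s, \bar t \rangle$ and, when $\kappa = 2$, also for $\langle \bar t, \bar y \rangle$.

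Next I split on $\kappa$. If $\kappa \ge 3$, the symmetric argument gives $\bar t, \bar y$ reflections with $\bar x \bar y = \bar s \bar t = R^{\nu/4}$; substituting $\bar y = \bar x R^{\nu/4}$ and $\bar t = \bar s R^{\nu/4}$ and using $R^{\nu/2} = 1$ yields $\bar t \bar y = \bar s \bar x$, so $\kappa = \lambda$. If $\kappa = 2$, a short case analysis shows that exactly one of $\bar y, \bar t$ equals $R^{\nu/4}$: $\bar y = \bar t$ would force $yt \in C_p$ to be an element of order $2$ and hence $y = t$, while $\bar x = \bar t$ would force $\bar s \bar t = \bar s \bar x$ to have order $\lambda \ge 3$, contradicting $\langle \bar s, \bar t \rangle \cong V_4$. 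In every surviving configuration $D_\nu = \langle \bar x, \bar s, R^{\nu/4} \rangle$ with $\langle \bar x, \bar s \rangle \cong D_{2\lambda}$. A parity inspection then gives $\nu \in \{2\lambda, 4\lambda\}$: for $\lambda$ even, $R^{\nu/4} = (\bar s \bar x)^{\lambda/2}$ already lies in $\langle \bar x, \bar s \rangle$ so $\nu = 2\lambda$; for $\lambda$ odd, $R^{\nu/4}$ is an extra central factor and $D_\nu \cong D_{2\lambda} \times C_2 \cong D_{4\lambda}$, giving $\nu = 4\lambda$.

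The last and hardest step is to rule out $\nu = 4\lambda$ as well as the $\lambda \equiv 0 \pmod 4$ subcase of $\nu = 2\lambda$, by a lifting argument in $H = C_p \rtimes D_\nu$. Since $H^{\mathrm{ab}}$ is a quotient of $C_2^4$ while $C_p$ has odd order, $C_p \subseteq [H, H]$, which forces the conjugation action $\phi \colon D_\nu \to \mathrm{Aut}(C_p)$ to be non-trivial and to factor through $C_2$. Writing each generator as $(z_g, \bar g) \in C_p \rtimes D_\nu$, the involution condition forces $z_g = 1$ whenever $\bar g \in \ker \phi$, and the relations $(xy)^2 = (st)^2 = (sx)^\lambda = (ty)^\kappa = 1$ pin down equalities among the remaining $z_g$. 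Running through the three possible choices of $\ker \phi$ for each dihedral type identified above, direct bookkeeping shows that the generated subgroup meets $C_p$ non-trivially precisely when $\nu = 2\lambda$, $\lambda \equiv 2 \pmod 4$, and $\kappa \in \{2, \lambda\}$; outside this range the free $C_p$-parameters collapse under the relations and the subgroup reduces to a complement isomorphic to $D_\nu$. Substituting $|H| = 2p\lambda$ and the appropriate $(k, \ell)$ into the Euler--Poincar\'e formula \eqref{eq:Eu} then yields $\chi = p(1 - \lambda/2)$ for $\kappa = 2$ and $\chi = p(2 - \lambda)$ for $\kappa = \lambda$.
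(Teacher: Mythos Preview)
Your approach is correct and is essentially the paper's argument reorganised. Both proofs write elements of $H$ as pairs $(a,\alpha)\in C_p\rtimes D_\nu$, note that the conjugation action $\Phi\colon D_\nu\to\mathrm{Aut}(C_p)$ has image contained in $C_2$ (so every element is tagged ``$+$'' or ``$-$'' according to membership in $\ker\Phi$), and then analyse which assignments of tags and $C_p$-coordinates to $x,y,s,t$ can actually generate the $C_p$-factor. You first pass to the quotient $\bar H\cong D_\nu$ to pin down $\nu\in\{2\lambda,4\lambda\}$ and the images $\bar x,\bar y,\bar s,\bar t$ explicitly, and only then lift; the paper instead works with the $(a,\pm)$ forms from the outset and extracts these constraints on the fly. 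Your quotient-first organisation is a little cleaner, and your derivation of $\kappa\in\{2,\lambda\}$ via $\bar t\bar y=\bar s\bar x$ is neat. What you leave as ``direct bookkeeping'' is, however, exactly where the paper does its real work: the key device there is the observation that if all four generators have forms drawn from $\{(0,+),(a,-)\}$ for a single fixed $a\in C_p$, then products always stay in this set and $C_p$ is never generated. That lemma, combined with the dichotomy on whether the central involution $z$ of $D_\nu$ is a square (i.e.\ whether $\nu\equiv 0$ or $4\bmod 8$, which governs whether $z\in\ker\Phi$ is forced), is what kills the unwanted cases; your lifting plan is right, but this step warrants being written out rather than asserted.
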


\begin{proof}
Suppose that an edge-biregular map of type $(2\kappa,2\lambda)$ exists with $H \cong C_p \rtimes D_\nu$. With the usual notation, for any edge-biregular map we have $\langle x, y \rangle \cong  \langle s, t \rangle \cong V_4$, and hence $4$ must divide $|D_\nu|$. In particular this means that $D_\nu$ has non-trivial centre, and this leaves us with only two congruence classes to consider for $\nu$, namely $0$ mod 8, where the central element of $D_\nu$ is a square, and $4$ mod 8, where the central element of the dihedral group is not a square.

\smallskip

We use additive notation in the first coordinates to indicate the Abelian cyclic group $C_p$ and multiplicative notation for the dihedral part of the semi-direct product, where $\Phi : D_\nu \rightarrow Aut(C_p)$, which maps $\alpha \rightarrow \phi_\alpha$, is the associated homomorphism. Thus each element of $H$, and hence each of the canonical generators $\{ x,y,s,t \}$, will be written $(a, \alpha)$ for some $a \in C_p$ and some $\alpha \in D_\nu$.

 For any involution $\alpha \in D_\nu$, the automorphism $\phi_\alpha$ must have order dividing two. Since $Aut(C_p) \cong C_{p-1}$ is cyclic there are only two such elements, the identity and the unique involution, so for $g$, a generating element of $C_p$, we have $\phi_\alpha$ must either fix $g$ or invert $g$. Hence for any $\beta \in D_\nu$ we will have $\phi_\beta$ being one of these two automorphisms. Also $(a,\alpha)^2 = (a+\phi_\alpha(a) , \alpha^2)$ so the element $(a,\alpha)$ is an involution in $H$ if and only if both $a+\phi_\alpha(a) = 0$ and $\alpha^2 =1$. Note that if $a \neq 0$ then, in order for $(a, \alpha)$ to be an involution, we must have $\phi_\alpha(a)=-a$, that is $\phi_\alpha$ must be inverting.

Triples of commuting involutions, that is the non-trivial elements of a copy of $V_4$ in the group $H$, have the form:
$\{ (a,\alpha), (b, \beta), (a+\phi_\alpha(b), \alpha \beta) \}$ such that $a+\phi_\alpha(b) = b + \phi_\beta (a)$, and where $\alpha$,  $\beta$ and $\alpha \beta $ are all involutions in $D_\nu$, one of which must necessarily be central.
So, if both $a$ and $b$ are non-zero in $C_p$, then $a-b=b-a$, so $2a=2b$, and hence $a=b$, so the third element of the triple must be $(0,\alpha \beta)$. It is important to note at this point that in this case $\alpha \beta$ would fix any $g \in C_p$.
If instead we suppose that $a \neq 0$ and $b=0$ then $a+\phi_\alpha(b) = b + \phi_\beta (a)$ becomes $a=\phi_\beta(a)$, in other words $\phi_\beta$ must fix every element of $C_p$.

\smallskip

For much of this proof we only need consider the action generated by the dihedral part of the elements in question, rather than the element itself, and so we abuse the notation to denote those in the kernel of $\Phi$, which fix the cyclic elements, by ``$+$" and members of the other coset, whose elements invert the cyclic group, by ``$-$". Hence any copy of $V_4$ in $H$ contains three distinct non-identity elements written: $(a,-), (a,-), (0,+) $ for some fixed $a \in C_p$ (which could itself be zero); or, exceptionally, $(0,+), (0,+), (0,+)$.

We now consider what combination of forms can occur within the set of four distinct canonical involutory generators for $H$, namely $\{ x,y,s,t \}$, taking each case in turn.

\smallskip

It will be useful to note that assuming $H$ is generated by any number of involutory elements, each of which has the form $(0,+)$ or $(a,-)$ for a given fixed $a \in C_p$, leads to a contradiction. Indeed since Ker$\Phi$ must have index two in $D_\nu$, given any two elements $\alpha$ and $\beta$ which are not in the kernel, we have $\alpha\beta \in$ Ker$\Phi$. Hence, in this case, the product of elements $(a, \alpha)(a, \beta)$ must have the form $(0, \alpha\beta)$ that is $(0,+)$. Also then, for $\gamma \in$ Ker$\Phi$, the product $(0, \gamma)(a, \alpha)$ must have the form $(0+a, \gamma\alpha)$ that is $(a,-)$, and similarly $(a, \alpha)(0, \gamma)$ must have the form $(a,-)$. Regardless of the orders of any such products, this restrictive set of forms makes it clear that $a$ is just a place-holder marking when the dihedral part of the element is not in the kernel of $\Phi$, and as such this fixed $a$ cannot contribute towards generating $C_p$ in the first coordinate. So, this is in contradiction to $H \cong C_p \rtimes D_\nu$.

\smallskip

We now highlight another property which will be of use:

Since $\langle x,s\rangle$ is a group isomorphic to $D_{2\lambda}$, henceforth we identify the dihedral parts of $\langle x,s \rangle$ with the corresponding elements for a particular copy of $D_{2\lambda} \leq D_\nu$. Now $2\lambda > 4$ so the central involution of the dihedral group $D_\nu$, which we now call $z$, will never occur as the second coordinate of $x$ or $s$ (for otherwise we would have $\langle x,s \rangle \cong V_4$).

If $\lambda$ is odd then $\langle x,s \rangle$ has trivial centre, and every rotation in $D_{2\lambda}$ is a square element in $D_\nu$. This means that either $D_{2\lambda} \leq $ Ker$\Phi$ or (all of the rotations but) none of the involutions in $D_{2\lambda}$ are in Ker$\Phi$. In particular, the canonical involutions generating $\langle s,x \rangle$ must have forms with the same symbol, be that $+$ or $-$, in the second coordinate.

If $\lambda$ is even then $\langle x,s \rangle$ has non-trivial centre and $z \in D_{2\lambda}$, and, according to the congruence class of $\nu$ modulo 8, this $z$ may or may not be a square in $D_\nu$.

\smallskip

Up to twinness of maps, we now divide the argument into two cases.

\smallskip

\noindent The first case to consider is when $x$ has the form $(a, -)$, for some $a$ which may be zero:

Suppose $s$ has the form $(b,-)$. But $sx$, which has order $\lambda$, is then denoted $(b-a,+)$. Now $p$ does not divide the face length $2\lambda$, so this forces $b=a$, and hence $s$ is also an $(a,-)$. Now since $t$ commutes with $s$, and $y$ commutes with $x$, we know that each of $t$ and $y$ will also have forms within the restrictive set $\{ (0,+), (a,-) \}$, contradicting $H \cong C_p \rtimes D_\nu$.

Suppose instead that $s$ has the form $(0,+)$. Thus $\lambda$ must be even and so $z \in D_{2\lambda}$. This splits into two cases, according to the congruence class of $\nu$. 

When $\nu \equiv 0$ mod 8, $z$ is a square element in $D_\nu$, and hence $z \in$ Ker$\Phi$. Remember that a triple of commuting involutions must include an element $(\dots, z)$ which in this case has the form $(0,+)$. Hence the set $\{s, t, st \}$ must be $\{ (0,+), (0,+), (0,+) \}$. In this case, along with $y$, which is either $(0,+)$ or $(a,-)$, our canonical generators are once again all contained within the restrictive set of forms $\{ (0,+), (a,-) \}$, contradicting $H \cong C_p \rtimes D_\nu$.

When $\nu \equiv 4$ mod 8, the centre $z \in D_{2\lambda}$ is not a square element in $D_\nu$, and hence there is the possibility of $z \notin$ Ker$\Phi$, which we will now explore. (If the central element was in the kernel we would be in the same situation as immediately above, and so the map would not exist.) The group $\langle s, x \rangle \cong \langle r,f |  r^\lambda, f^2 , (rf)^2 \rangle =  D_{2\lambda} \leq D_\nu$, can only have an inverting central element if $\lambda = 2(2m+1)$ and $r \notin$ Ker$\Phi$. Up to the choice of notation for $f$, a reflection, and $r$, which generates a cyclic group of order $\lambda$, this happens when exactly one of these two generating involutions $f,rf$ is in the kernel, say $rf\in$ Ker$\Phi$, thereby ensuring $r \notin$ Ker$\Phi$ and hence $z = r^{2m+1} \notin$ Ker$\Phi$. With this assumption $x=(a,f)$ and $s=(0,rf)$, which forces $y \in \{ (a,z) , (0,fz) \}$ and $t \in \{ (b,z) , (b,rfz) \}$. Notice in particular that $t$ has the form $(b,-)$. If $y=(a,z)$ then the non-divisibility of $\kappa$ by $p$ forces $a=b$ and thus the canonical generators would be restricted to a set which cannot generate $H$. So, for such an edge-biregular map to survive, $y=(0,fz)$ and the choices for $t$ with $b \neq a$ would give edge-biregular maps of type $(4, 2\lambda)$ and $(2\lambda, 2\lambda)$ respectively. In particular it is clear that no larger dihedral group than $D_{2\lambda}$ can be found in the second coordinates and so $2\lambda = \nu$. Meanwhile the element $xt$ has the form $(a-b,+)$ and hence the cyclic part $C_p$ of $H$ will also be generated by the set of canonical generators. These maps therefore exist, as do their twins
(and indeed duals) while the characteristic for the corresponding supporting surfaces can be found using the Euler-Poincar\'e formula.

\smallskip

\noindent Until now we have been considering the case when $x$ has the form $(a, -)$, for some $a$ which may be zero, and latterly when we also had $s$ being $(0,+)$. The final case to address is when $x$ has the form $(0,+)$:

To avoid being a twin of the maps considered immediately above, we may now assume that $s$ also has the form $(0,+)$. But then, since $p$ does not divide $\kappa$, the order of $yt$, the two canonical generators $y$ and $t$ introduce a maximum of one new form, say $(a,-)$, to the set of canonical generators. Hence we are once again left with an overly restrictive set of generating forms, contradicting $H \cong C_p \rtimes D_\nu$. 
\end{proof}

We are now in position to exclude primes $p\ge 13$ from consideration in the case of $p$ dividing $|H|$.

\begin{prop}\label{prop:11}
If $M=(H;x,y,s,t)$ is an edge-biregular map, and $p$ is a divisor of $|H|$, then $p\le 11$.
\end{prop}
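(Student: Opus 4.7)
The strategy is to argue by contradiction: assume $p \ge 13$, and use structural information about $H$ to invoke Lemma \ref{lem:ex}. By Lemma \ref{lem:kl}, an odd prime $p \ge 13$ dividing $|H|$ forces either $|H| = 12p$ with type $(4,6)$ or $|H| = 8p$ with type $(4,8)$. A Sylow count next gives the key structural handle: the number $n_p$ of Sylow $p$-subgroups divides $|H|/p \in \{8, 12\}$ and satisfies $n_p \equiv 1 \pmod{p}$, and since $p \ge 13 > 12$, we must have $n_p = 1$. Thus the unique Sylow $p$-subgroup $P \cong C_p$ is normal in $H$, and Schur--Zassenhaus provides a semidirect decomposition $H = P \rtimes Q$ with $|Q| \in \{8, 12\}$.

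The next step is to identify the complement $Q$ as a dihedral group. Since $|P| = p$ is coprime to the order $\ell \in \{6, 8\}$ of the face stabiliser $\langle s, x \rangle \cong D_\ell$, the quotient map $H \to H/P \cong Q$ restricts to an injection on $\langle s, x \rangle$, so $Q$ contains an isomorphic copy of $D_\ell$. When $|Q| = 8$, the only group of that order containing $D_8$ is $D_8$ itself. When $|Q| = 12$, among the five isomorphism classes of groups of order $12$ only $D_{12}$ contains $D_6$: the groups $C_{12}$ and $C_6 \times C_2$ are abelian, $A_4$ has no subgroup of order $6$, and the dicyclic group $\mathrm{Dic}_3$ has a unique involution. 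Thus $H \cong C_p \rtimes D_\nu$ with $\nu \in \{8, 12\}$.

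Since $p \ge 13$ is coprime to $\kappa = 2$ and to $\lambda \in \{3, 4\}$, the hypotheses of Lemma \ref{lem:ex} are met, and its conclusion forces both $\nu = 2\lambda$ and $\nu \equiv 4 \pmod{8}$. For type $(4,6)$, however, $\nu = 12 \neq 6 = 2\lambda$; for type $(4,8)$, $\nu = 2\lambda = 8$ but $8 \not\equiv 4 \pmod{8}$. Both possibilities lead to a contradiction, proving $p \le 11$.

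The principal obstacle is the second step, where one pins down the complement $Q$ up to isomorphism knowing only its order and that it contains a specified dihedral subgroup; the case analysis of groups of order $12$ in particular needs care. It is worth noting that the bound $p \le 11$ is exactly the threshold at which the Sylow argument of the first step succeeds unconditionally: for $p \in \{5, 7, 11\}$ the congruence $n_p \equiv 1 \pmod p$ admits solutions other than $n_p = 1$ among the divisors of $8$ or $12$ (for instance $n_{11} = 12$ for $|H| = 132$), so the normality of $P$ is no longer automatic and the argument must be extended by other means.
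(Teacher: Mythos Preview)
Your proof is correct and follows the same overall architecture as the paper's: for $p\ge 13$ a Sylow count forces the Sylow $p$-subgroup $C_p$ to be normal, Schur--Zassenhaus gives a complement of order $8$ or $12$, that complement is identified as dihedral, and Lemma~\ref{lem:ex} then supplies the contradiction. The only point of divergence is how the complement is pinned down as $D_\nu$. The paper argues that $H/C_p$ is still the automorphism group of an edge-biregular map of the same type, now on a surface with $\chi=-1$, and then invokes Lemma~\ref{lem:chi1}; you bypass this by noting that $C_p\cap\langle s,x\rangle$ is trivial (coprime orders), so the quotient embeds $D_\ell$ into the complement, and then eliminating the non-dihedral groups of order $8$ or $12$ by inspection. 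Your route is a touch more elementary and self-contained, avoiding the need to check that the quotient remains a \emph{smooth} quotient of the same type; the paper's route is more conceptual, reusing the map--group dictionary, and generalises more readily to other situations where the quotient map is known to preserve the combinatorial structure.
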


\begin{proof}
We suppose that $H = \langle x,y,s,t \rangle$ is the group for an edge-biregular map on surface of Euler characteristic $-p$ and that $p \; | \; |H|$ where $p \geq 13$. Combining this with Lemma \ref{lem:kl} we note that the edge-biregular map must have type $(4,8)$ or $(4,6)$ forcing $\nu\in \{8,12\}$ and this implies $|H| = \nu  p \leq 12p < p^2$. Hence there is only one Sylow $p$-subgroup of $H$ and this Sylow subgroup is cyclic, and unique (hence normal) in $H$. 
\smallskip

Since we are restricted to maps of type $(4,8)$ or $(4,6)$, and $ p \geq 13$, the factor group $H / C_p$ forms a smooth quotient and so would correspond to an edge-biregular map of the same type as $H$ on a surface with Euler characteristic $\chi = -1$. Applying now Lemma \ref{lem:chi1} one sees that $H/C_p$ is a dihedral group of order $8$ or $12$. Schur-Zassenhaus now implies  that $H$ is a (non-trivial) semi-direct product $H \cong C_p \rtimes D_\nu$. But this is in contradiction to Lemma \ref{lem:ex}, so we conclude that $p\leq 11$. 
\end{proof}

As the next step we extend Proposition \ref{prop:11} to all odd primes greater than 3, while the case where $p=3$ is dealt with separately in Proposition \ref{prop:three}. The results of the following two propositions, and those of Theorem \ref{thm:main-even}, could be found using a computer, however here we adopt a more classical approach.

\begin{prop}\label{prop:odd}
If $M=(H;x,y,s,t)$ is an edge-biregular map on a surface of Euler characteristic $-p$ for some odd prime $p\geq 5$, then $p$ is not a divisor of $|H|$.
\end{prop}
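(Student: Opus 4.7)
I would follow the blueprint of Proposition \ref{prop:11}: whenever the Sylow $p$-subgroup $P\leq H$ is normal, the semidirect-product structure $H\cong C_p\rtimes D_\nu$ produced by Schur--Zassenhaus falls foul of Lemma \ref{lem:ex}. The only remaining task is to dispose of the handful of configurations in which Sylow's theorem does not immediately force $n_p=1$.

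By Proposition \ref{prop:11} we may restrict to $p\in\{5,7,11\}$, and Lemma \ref{lem:kl} leaves only the types $(k,\ell)=(4,6)$ with $|H|=12p$ and $(k,\ell)=(4,8)$ with $|H|=8p$. Writing $\nu=|H|/p\in\{8,12\}$, coprimality $\gcd(p,\nu)=1$ makes any Sylow $p$-subgroup $P\leq H$ cyclic of order $p$. All canonical generators and the products $xy,\,st,\,ty,\,sx$ have orders in $\{2,3,4\}$, so whenever $P\trianglelefteq H$ the quotient $H\to H/P$ is smooth, producing an edge-biregular map of the same type on a surface with $\chi=-1$. Lemma \ref{lem:chi1} then identifies $H/P$ with a dihedral group of order $\nu$, and Schur--Zassenhaus yields $H\cong C_p\rtimes D_\nu$. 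With $\kappa=2$ and $\lambda=\ell/2\in\{3,4\}$, Lemma \ref{lem:ex} requires both $\nu=2\lambda$ and $\nu\equiv 4\pmod 8$; but type $(4,6)$ has $\nu=12\neq 6=2\lambda$, while type $(4,8)$ has $\nu=8\not\equiv 4\pmod 8$. Either way a contradiction ensues whenever $P$ is normal.

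Sylow's counting forces $n_p=1$ except in three configurations, which I treat by separate ad-hoc arguments. First, $(p,\nu)=(5,12)$, $|H|=60$, $n_5=6$: the only such group is $A_5$, whose fifteen involutions partition into five mutually disjoint Klein four-subgroups, so $\langle x,y\rangle$ and $\langle t,y\rangle$ (both $V_4$'s sharing $y$) must coincide, making $t=xy$; then $\langle s,t\rangle$ also coincides with $\langle x,y\rangle$, forcing $s\in\{x,y,xy\}$ and violating the distinctness of the canonical generators. Second, $(p,\nu)=(7,8)$, $|H|=56$, $n_7=8$: element counting leaves eight elements of order dividing $8$, which constitute the unique, normal Sylow $2$-subgroup $P_2$; a non-trivial $C_7$-action on $P_2$ forces $P_2\cong C_2^3$, which is abelian, whereas the face-stabiliser $\langle s,x\rangle\cong D_8$ is a non-abelian subgroup of order $8$ that would have to equal $P_2$. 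Third, $(p,\nu)=(11,12)$, $|H|=132$, $n_{11}=12$: then $|N_H(P)|=11$ and $P$ is self-centralising in its normaliser, so Burnside's normal $p$-complement theorem gives $H\cong N\rtimes C_{11}$ with $|N|=12$; but $11\nmid|\mathrm{Aut}(N)|$ for every group $N$ of order $12$, so the action is trivial, making $C_{11}$ a direct factor and yielding $n_{11}=1$, a contradiction.

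The principal obstacle is the case-work for the non-normal Sylow configurations. Of these, the $A_5$ case is the most distinctive: it rests on the global rigidity of Klein four-subgroups in $A_5$, which propagates through the edge- and vertex-stabilisers to collapse the four canonical generators into a single $V_4$. The order-$56$ and order-$132$ cases are more arithmetic and yield to standard Sylow and Burnside reasoning, with the payoff that the ``normal Sylow $p$'' path of Step 2 is eventually the only one that can survive, and it is closed off by Lemma \ref{lem:ex}.
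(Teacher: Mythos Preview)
Your proof is correct and follows the paper's overall blueprint: reduce to $p\in\{5,7,11\}$ via Proposition~\ref{prop:11} and Lemma~\ref{lem:kl}, use Schur--Zassenhaus together with Lemmas~\ref{lem:chi1} and~\ref{lem:ex} to eliminate the normal-Sylow configurations, and treat the three residual non-normal cases by hand. Your $A_5$ argument is essentially identical to the paper's.

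The substantive difference lies in the two remaining ad-hoc cases. For $|H|=56$ the paper simply observes that the unique Sylow $2$-subgroup must coincide with the face-stabiliser $\langle s,x\rangle\cong D_8$, whence all four canonical involutions lie in $D_8$ and $H$ collapses; your route via the $C_7$-action forcing $P_2\cong C_2^3$ reaches the same contradiction with one extra (harmless) step. For $|H|=132$ the contrast is sharper: the paper argues by Sylow counting that $\langle sx\rangle\cong C_3$ must be normal and then derives a contradiction from the explicit conjugation action of $y$ and $t$ on $sx$, a computation of about a dozen lines. Your use of Burnside's normal $p$-complement theorem (since $N_H(P)=P$) to obtain a normal complement $N$ of order $12$, followed by the observation that $11\nmid|\mathrm{Aut}(N)|$ for every such $N$, is markedly cleaner and avoids any element-level manipulation. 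The paper's argument has the advantage of being entirely self-contained, while yours imports a standard transfer result but dispatches the case in two lines.
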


\begin{proof}
The conclusion of Proposition \ref{prop:11} leaves us with a few small cases we need to address, namely, those listed in Lemma \ref{lem:kl} (together with the corresponding types) for $p \in \{3,5,7,11\}$; here we will handle the three larger primes one by one.
\smallskip

\noindent $\bullet$ {\sl The prime $p=5$, type $(4,8)$ for $|H|=8 \times 5$ and type $(4,6)$ for $|H| = 12 \times 5$.}
\smallskip

For type $(4,8)$ with $|H| = 40$. By Sylow theorems the Sylow $5$-subgroup is normal so $H \cong C_{5} \rtimes D_8$, contrary to Lemma \ref{lem:ex}.
\smallskip

The second type to consider here, $(4,6)$, comes with a group $|H|$ of order $60$. We know $H$ contains a (face stabiliser) subgroup isomorphic to $D_6$, and so $H$ is not Abelian. Suppose that $H$ has a unique Sylow $5$-subgroup, which (by the Schur-Zassenhaus theorem) implies that $H \cong C_5 \rtimes K$ for some group $K$ of order $12$. Out of the five such groups $L$, however, only $D_{12}$ is generated by involutions, and the possibility $H \cong C_5 \rtimes D_{12}$ contradicts Lemma \ref{lem:ex}. It follows that $H$ contains  six Sylow $5$-subgroups and so $H \cong A_5$. Now $A_5$ contains 15 involutions and $5$ Sylow $2$-subgroups, meaning that the intersection of two copies of $V_4$ is either trivial or the two groups are the same. Hence $\langle x,y \rangle = \langle y,t \rangle = \langle s,t \rangle \cong V_4$ implies $|H|=4$, which is a
contradiction.
\smallskip

\noindent $\bullet$ {\sl The prime $p=7$, type $(4,8)$ for $|H|=8 \times 7$ and type $(4,6)$ for $|H| = 12 \times 7$.}
\smallskip

For the type  $(4,8)$ we have $|H|=56$. If the Sylow $7$-subgroup of $H$ is not normal then $H$ contains $8 \times 6 = 48$ elements of order $7$, leaving only eight other elements in the group $H$. These eight elements must form the single (and hence normal) Sylow $2$-subgroup, which is the face stabiliser, $D_\ell \cong D_8$. But then all the involutions of $H$ must lie in $D_\ell$ and so $H$ would not be generated by involutions. So the Sylow $7$-subgroup of $H$ is normal and hence $H \cong C_7 \rtimes D_8$ which contradicts Lemma \ref{lem:ex} again.
\smallskip

The type $(4,6)$ and $|H|=84$ is excluded by Sylow theorems and the Schur-Zassenhaus as for the same type but in the case $p=5$: A group of order $84$ has a unique (normal) Sylow $7$-subgroup, and then the unique possibility is $H \cong C_7 \rtimes D_{12}$, which is impossible by Lemma \ref{lem:ex}.
\smallskip

\noindent $\bullet$ {\sl The prime $p=11$, type $(4,8)$ for $|H|=8 \times 11$ and type $(4,6)$ for $|H| = 12 \times 11$.}
\smallskip

The type $(4,8)$ with $|H|=88$ is excluded by observing that $H$ contains a unique Sylow $11$-subgroup, leaving only the possibility $H \cong C_{11} \rtimes D_8$ that contradicts Lemma \ref{lem:ex}.
\smallskip

The final case to consider is the one of the type $(4,6)$ that comes with a group $H$ of order $132$.
If the Sylow $11$-subgroup in $H$ is normal then, by a similar analysis as done for $p=5$, we would have $H \cong C_{11} \rtimes D_{12}$, which is impossible by Lemma \ref{lem:ex}. So there are $12$ Sylow $11$-subgroups and there are $12 \times 10 = 120$ elements of order $11$. If $H$ contained more than one Sylow $3$-subgroup then there would be $8$ elements of order $3$, leaving room for only one Sylow $2$-subgroup. Now, $H$ is generated by involutions and so the Sylow $2$-subgroup $D_k \cong V_4$ cannot be unique, otherwise we would have $H = D_k$. So, the Sylow $3$-subgroup in $H$, isomorphic to $C_3$, must be unique and hence normal. Thus, in our case with $\ell=6$, the subgroup $\langle sx \rangle \cong C_3$ is normal in $H$, which implies that $ysxy, tsxt \in \{ sx, xs \}$. If we suppose $tsxt=sx$ then $txt=x$ so $t$ commutes with $s$ and $x$ and $y$. But $\langle x,y \rangle$ is a Sylow $2$-subgroup of $H$ and so, by the distinctness of the generators, we would have $t=xy$. Hence $H=\langle sx \rangle \rtimes \langle x,y,t \rangle = \langle sx \rangle \rtimes \langle x,y \rangle$ which has order $12$, not $132$. By symmetry, supposing that $y$ fixes $sx$ leads to a contradiction too. Hence we have $tsxt=xs$ and $ysxy=xs$. These give, in turn, $txt=sxs$ and $ysy=xsx$ which, when combined, give $txtysy=1$. However, recalling that $y$ commutes with both $x$ and $t$, this yields $xs=1$, a contradiction. This completes the proof.
\end{proof}
\smallskip

We now consider the exceptional case when $p=3$, which does indeed yield edge-biregular maps, namely those corresponding to the final part of Theorem \ref{thm:main-odd}.

\begin{prop}\label{prop:three}
If $M=(H;x,y,s,t)$ is an edge-biregular map on a surface of Euler characteristic $-3$, then up to duality and twinness, $H$ is of type $(4,6)$ and has a presentation of the form
\[ H = \langle x,y,s,t | x^2, y^2, s^2, t^2, (xy)^2, (st)^2, (yt)^2, (sx)^3, (xyt)^3, (sty)^3, (xyst)^2 \rangle \cong D_6 \times D_6  \]
\end{prop}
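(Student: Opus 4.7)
The plan is to invoke Lemma~\ref{lem:kl} at $p = 3$, which leaves three candidate triples $(k,\ell,|H|)$: $(4,6,36)$, $(4,8,24)$, and $(6,12,12)$, and then to dispatch the last two quickly before concentrating on the first. For $(6,12,12)$ the face stabiliser $\langle s,x\rangle \cong D_{12}$ exhausts $H$, so commutation forces $y \in \{z, xz\}$ and $t \in \{z, sz\}$ (where $z$ is the central involution of $D_{12}$); a direct check of the four combinations shows that none satisfies $(ty)^3 = 1$. For $(4,8,24)$ the Sylow $2$-subgroup must coincide with the face stabiliser $D_8$, and Sylow $3$ is either normal, giving $H \cong C_3 \rtimes D_8$ (excluded by Lemma~\ref{lem:ex} since $\nu = 8 \not\equiv 4 \pmod 8$), or non-normal, forcing $H \cong S_4$. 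In the latter case, $(sx)^4 = 1$ combined with the fact that $4$-cycles in $S_4$ are odd forces one of $s, x$ to be a transposition, and the small centralisers of involutions in $S_4$ together with the commuting conditions $[x,y] = [s,t] = [t,y] = 1$ confine all four generators to a single Sylow $2$-subgroup of $S_4$, making $|\langle x,y,s,t\rangle| \leq 8$, a contradiction.

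The heart of the proof is the case $(4,6,36)$. I would first show that the Sylow $3$-subgroup $P_3$ of order $9$ is normal in $H$: if not, the conjugation action on its four conjugates delivers $\phi : H \to S_4$ with $|\ker\phi| \in \{1, 3, 9\}$; kernel $1$ is impossible by orders, kernel $9$ contradicts non-normality, and kernel $3$ forces $H/\ker\phi \cong A_4$, contradicting that the four projected involutions of $H$ lie in the normal $V_4 \trianglelefteq A_4$ (which does not generate $A_4$). Hence $P_3 \trianglelefteq H$, and $H/P_3 \cong V_4$ (being generated by involutions, of order $4$), so by Schur--Zassenhaus $H \cong P_3 \rtimes V_4$. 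To rule out $P_3 = C_9$, note that $\mathrm{Aut}(C_9) = C_6$ so $V_4$ acts through a $C_2$-quotient; a non-trivial action gives $H \cong D_{18} \times C_2$, in which distinct reflections of $D_{18}$ never commute and hence every Klein $4$-subgroup has the form $\{1, (f,1), (1,z), (f,z)\}$ for a single reflection $f$. Combining this with $(sx)^3 = 1$ confines $\langle x,y,s,t\rangle$ to a subgroup of order $12$, contradicting $|H| = 36$.

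The main obstacle is therefore the $P_3 = C_3 \times C_3$ sub-case, where $H$ is determined by the image of a homomorphism $V_4 \to \mathrm{Aut}(C_3 \times C_3) = GL(2,3)$. Up to conjugacy, this image is $\{I\}$, $\langle -I \rangle$, $\langle A \rangle$ with $A = \mathrm{diag}(1,-1)$, or the Klein $4$-subgroup $\{I, -I, A, -A\}$, yielding respectively the candidates $C_3^2 \times V_4$, $\mathrm{Dih}(C_3^2) \times C_2$, $C_3 \times D_6 \times C_2$, and $D_6 \times D_6$. The first is abelian and cannot contain $D_6$, and the middle two can be discarded by a parallel subgroup-containment analysis (the commuting conditions and $(sx)^3 = 1$ again confine the four generators to a proper subgroup), leaving $H \cong D_6 \times D_6$ as the only survivor. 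To finish the proof I would exhibit explicit generators $x = (a, a')$, $y = (a, 1)$, $s = (b, b')$, $t = (1, b')$ with $a, b$ distinct involutions in the first $D_6$ factor and $a', b'$ similarly in the second, verify each of the listed relations by a direct calculation, and note that the coordinate projections of $\langle x,y,s,t \rangle$ are surjective with kernels containing the full complementary $D_6$, so that $\langle x,y,s,t \rangle = D_6 \times D_6$; uniqueness up to duality and twinness then follows because the only remaining freedom in the choice of $\{a,b,a',b'\}$ is absorbed by the symmetries swapping the two $D_6$ factors and swapping reflections within each factor.
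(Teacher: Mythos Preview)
Your proposal is correct and follows essentially the same route as the paper: invoke Lemma~\ref{lem:kl} to reduce to three types, dispatch $(6,12)$ and $(4,8)$, and for $(4,6)$ establish normality of the Sylow $3$-subgroup via an action into $S_4$ (you use conjugation on Sylow subgroups, the paper uses the coset action), rule out $P_3 \cong C_9$, and then show that the $V_4$-action on $C_3^2$ must be faithful, yielding $H\cong D_6\times D_6$. Your tactical choices differ only in minor ways---a self-contained $(6,12)$ argument where the paper cites~\cite{Jea}, and an enumeration of the four possible images of $V_4 \to GL(2,3)$ where the paper argues injectivity of $\Phi$ directly---but the overall strategy is the same.
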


\begin{proof}
When the prime $p=3$, we have the following possibilities: type $(6,12)$ for $|H| = 4 \times 3$; type $(4,8)$ for $|H|=8 \times 3$; and type $(4,6)$ for $|H| = 12 \times 3$.
\smallskip

An edge-biregular map of type $(6,12)$ on this surface would have $|H| = 4 \times 3 = 12$ and so the group $H$ must be the face stabiliser $H=D_l \cong D_{12}$ which is dihedral. Referring to the dihedral classification of edge-biregular maps in \cite{Jea} we can see that such a map does not exist.

For type $(4,8)$ we have $|H| = 24$ and as $H$ is generated by involutions, the Sylow $2$-subgroup $D_\ell \cong D_8$ cannot be normal in $H$. This leaves only two options: $H \cong C_{3} \rtimes D_8$, which is clearly impossible, or $H \cong S_4$. Suppose $H \cong S_4$ is represented as a permutation group on the set $\{1,2,3,4\}$. The non-trivial elements in the three copies of $V_4$ in $S_4$ then form the sets $T_1=\{ (12)(34), (12), (34) \}$, $T_2=\{ (13)(24), (13), (24) \}$, $T_3=\{ (14)(23), (14), (23) \}$ and $T_4=\{ (12)(34), (13)(24), (14)(23) \}$. Since $\langle s,x \rangle = D_\ell \cong D_8$, we may assume, without loss of generality, that $\{ s,x \} = \{ (12)(34), (24) \}$. But then the fact that $\langle y,t \rangle = D_k \cong V_4$ leads us to conclude that for the four {\em distinct} canonical generators we must have $x,y,t,s \in T_2 \cup T_4$. But these two sets only generate $D_8$, not the whole of $S_4$, a contradiction.
\smallskip

We proceed with type $(4,6)$ for a group $H$ of order $36$ and let $K$ be its Sylow $3$-subgroup of order $9$.

Consider the {\em non-trivial and transitive} permutation representation $\pi$ of $H$ on the set $H/K$ of the right cosets of $K$ in $H$, given as follows: To each $h\in H$ we assign a permutation $\pi_h$ of the set $H/K$ mapping the coset $Kx$ onto $Kxh$, for every $x\in H$. Now, as $|H/K|=4$, the representation $\pi$ is a group homomorphism from $H$ into $S_4$, and its kernel is a normal subgroup of $H$ distinct from $H$; note also that $\pi$ cannot be surjective (by divisibility of the source and target by $4$ and $8$). The only proper transitive subgroups of $S_4$ are $A_4$ and the unique subgroup isomorphic to $V_4$, so that $|{\rm Ker}(\pi)|\in \{3,9\}$. But notice that if $|{\rm Ker}(\pi)|=3$, so that $H$ contains a normal subgroup ${\rm Ker}(\pi)\cong C_3$, and ${\rm Im}(\pi)\cong A_4$ then we have an immediate contradiction since $A_4$ is not generated by involutions.

\smallskip

It remains to consider the case when $|{\rm Ker}(\pi)|=9$, which means that the kernel coincides with the Sylow $3$-subgroup $K$ of order $9$ in $H$, and $H\cong K\rtimes V_4$.

Suppose, for a contradiction, that $K=C_9$. Now, $C_3$ is characteristic in $C_9$ and hence normal in $H$. There is only one copy of $C_3$ in $H$, and that is $\langle sx \rangle $. Denoting elements in $H/{\langle sx \rangle }$ with bar notation, we then have $\bar{s} = \bar{x}$ and hence $H/{\langle sx \rangle }$ is generated by three commuting involutions $\bar{x}, \bar{y}$ and $\bar{t}$. This has order at most 8, not the required 12, and so is in contradiction to the order of $H$.

We may consider $C_3^2$ as a two dimensional vector space over $GF(3)$ with elements written as vectors. The automorphism group for $C_3^2$ is $GL(2,3)$ which is known to have just one conjugacy class of subgroups isomorphic to $V_4$.
Thus we let
$
 x= ( \underline{x}, x') , \quad
  y= ( \underline{y}, y') , \quad
   t= ( \underline{t}, t') , \quad
    s= ( \underline{s}, s') 
$
where the underlined vectors in the first coordinates are elements of $C_3^2$, and $x',y',t',s' \in V_4 \leq GL(2,3)$. 
Now, since $sx$ has order 3, it is clear that $s'x'$ must be the identity, and so $x'=s'$. Also, since the products $xy$, $yt$ and $ts$ must all have order 2, we certainly have $x' \neq y' \neq t' \neq x'$, and so $t' = x'y'$.

Suppose the homomorphism $\Phi : V_4 \to GL(2,3)$ associated with the semi-direct product is not injective. In any case, the kernel of $\Phi$ cannot be the whole of $V_4$, since this would yield a direct product $H \cong C_3^2 \times V_4$ which certainly cannot be generated by involutions. The only remaining option is for two of the elements in $V_4$ to be mapped to a given involution $\alpha \in GL(2,3)$, while their product is mapped to the identity.
Now $x' \notin$ Ker$\Phi$, otherwise $x$ and $s$ being involutions would then force $\underline{x} = \underline{s} = \underline{0}$ which is absurd since $xs$ must have order 3.
So, up to twinness, we may assume, $x'y' \in$ Ker$\Phi$. But then this would force $\underline{t} = \underline{0}$, meaning $t$ is central in the group $H$. Now, the elements $x$ and $y$ are involutions so $\alpha$ acts to invert any non-zero parts of $\underline{x}$ and $\underline{y}$. Also, $x$ and $y$ commute so $\underline{x}+\alpha(\underline{y}) = \underline{y}+\alpha(\underline{x})$, that is $\underline{x} - \underline{y} = \alpha(\underline{x} - \underline{y})$, and  this forces $\underline{x}=\underline{y}$. Now we have in fact $t=xy$ and so $H = \langle x,t,s \rangle = \langle x,s \rangle \langle t \rangle \cong D_6 \times C_2$, a contradiction.

We have now shown that the homomorphism mapping from $V_4$ to the associated actions in $GL(2,3)$ must be injective, since in any other case we could not generate the whole group $H \cong C_3^2 \rtimes V_4$ just with involutions. So we may now identify $\{ x',y',t',s' \}$ with their images in $GL(2,3)$, choosing the canonical copy of $V_4$ in $GL(2,3)$, which consists of the diagonal matrices.

In the interests of generating the whole group $H \cong C_3^2 \rtimes V_4$, it can be checked that $ x'=s'=-I$. To satisfy the other known order properties for the products of pairs of generating involutions, up to twinness, we find we are restricted to the set of canonical involutions as follows:
\begin{small}
$$
 x= ( \begin{pmatrix} x_1 \\ x_2 \end{pmatrix}, \begin{bmatrix}
-1&0 \\ 0& -1
\end{bmatrix}) , \quad
  y= ( \begin{pmatrix} 0 \\ x_2 \end{pmatrix}, \begin{bmatrix}
1&0 \\ 0& -1
\end{bmatrix}) , \quad
   t= ( \begin{pmatrix} s_1 \\ 0 \end{pmatrix}, \begin{bmatrix}
-1&0 \\ 0& 1
\end{bmatrix}) , \quad
    s= ( \begin{pmatrix} s_1 \\ s_2 \end{pmatrix}, \begin{bmatrix}
-1&0 \\ 0& -1
\end{bmatrix}) 
$$ 
\end{small}
\noindent
where $x_1 \neq s_1$ and $x_2 \neq s_2$. Regardless of whichever allowable choices of values for $x_i$ and $s_i$ are made, it is clear that $\langle xyt , sty \rangle \cong C_3^2$. Also we have $\langle y, t \rangle\cong V_4$ as expected, and hence $H =  \langle x,y,s,t \rangle = \langle xyt , sty \rangle \rtimes \langle y, t \rangle\cong C_3^2 \rtimes V_4$. Also $H = \langle xyt , xy \rangle \times \langle sty, st \rangle \cong D_6 \times D_6$ and a presentation for the resulting map will be as follows:
\[ H = \langle x,y,s,t | x^2, y^2, s^2, t^2, (xy)^2, (st)^2, (yt)^2, (xyt)^3, (sty)^3, (xyst)^2 \rangle \cong D_6 \times D_6  .\]
By this analysis, the map is unique up to duality and twinness.
The above group presentation clearly indicates that this edge-biregular map is isomorphic to its twin, and hence is also a fully regular map. Meanwhile the face length $\ell$ being 6 is clearer to see when the additional (consistent) relator $(sx)^3$ is incorporated into the presentation .
\end{proof}

The only prime left to be considered in this part is $p=2$, and by exploring this case we will also establish validity of Theorem \ref{thm:main-even}.
\medskip

\noindent{\bf Proof of Theorem \ref{thm:main-even}.} \ 

We now assume $p=2$. We recall from the initial workings of this section that the possible types of map which can occur when $\nu \in \{4,8,12\}$ giving $|H|=8$, $16$ and $24$, respectively, for types $(8,8)$, $(4,8)$ and $(4,6)$. We will also need to address the only remaining case for even $k, \ell$ such that $4 \, | \, |H| = 2 \nu$, namely when $\nu = 6$ and $|H| = 12$ which occurs when the type $(k,\ell)$ is $(4,12)$ or $(6,6)$. We proceed according to the order of the group $H$.

The first possibility is dealt with quickly, because in an edge-biregular map of type $(8,8)$ with $|H|=8$ we {\em must} have $H\cong D_8$ (the stabiliser of a (single) vertex, say). The group $D_8$ contains exactly four non-central involutions, and two subgroups isomorphic to $V_4$, while the central element cannot be equal to any one of the distinct canonical involutory generators. It may be checked that up to isomorphism (since the resulting map is both self-dual and fully regular) there is just one way to present $H$ canonically, necessarily equivalent to the form given by $H_{2,1}$ in Theorem \ref{thm:main-even}.
\smallskip

When we consider $\nu=6$ for the type $(4,12)$ we clearly have the group $H = \langle x,s \rangle \cong D_{12}$, and up to twinness and duality there is only one canonical presentation. This is shown as $H_{2,2}$ in Theorem \ref{thm:main-even}.

In the case of type $(6,6)$, we have $\nu=6$ and the group is again $H \cong D_{12}$. This dihedral group contains two copies of $D_6$, one of which must be $\langle s,x \rangle$ and the other $\langle t,y \rangle$. There is only one cyclic group of order three contained in $D_{12}$ and so there are only two possibilities for the elements of order $3$, namely $sx=yt$ (which results in contradictions) or $sx=ty$ which yields the presentation $H_{2,3}$ in Theorem \ref{thm:main-even}.

\smallskip
The next possibility we look at is type $(4,8)$, with $|H|=16=2\times 8$. We know $\langle s,x \rangle = D_\ell \cong D_{8}$ is normal in $H$ since it has index two. First note that $H/D_\ell = \langle yD_\ell, tD_\ell \rangle$ has order 2 so either $y \in D_\ell$, or $t \in D_\ell$, or $yD_\ell = tD_\ell \neq D_\ell$ and so the product $ty$ is in $D_\ell$. We deal with the latter case after addressing the first two together, since they are equivalent up to twinness. Also $\langle sx \rangle \cong C_4$, being characteristic in the dihedral group, is normal in $H$. Conjugation by the elements $x$ and $s$ clearly invert $sx$ while each of $y$ and $t$ must either fix or invert $sx$, resulting in differing canonical presentations for $H$.

\smallskip

We first suppose, by our choice of the labeling of orbits, that is up to twinness, that $y \in D_\ell$.
If $y$ inverts $sx$ then we have $y \in \{ sxs, xsx \}$ and so, by the distinctness of generators, $y=sxs$, and the order of $ys$ is four. If $t$ also inverts $sx$ then $tsxt=xs$. But then $y=stsxt=txt$ which implies $y=x$, a contradiction. However, if $t$ fixes $sx$, which happens if and only if $t$ fixes $x$, we obtain
\[ \langle \; x, y, s, t \; |  \; x^2, y^2, s^2, t^2, (xy)^2, (st)^2, (ty)^{2}, (sx)^{4}, (ys)^4, (tx)^2,
ysxs \rangle \] which is the presentation of $H_{2,4}$ in Theorem \ref{thm:main-even}. Notice that this map must be supported by an orientable surface since there are no odd length relators in this presentation.
\smallskip

If, on the other hand, $y$ fixes $sx$ then $y=(sx)^2$, the central element in $D_\ell$, and hence $y(sx)^2$ is a relator, forcing the supporting surface to be non-orientable. Another consequence of $y=(sx)^2$ is that $(ys)^2$ may also appear as a relator. If $t$ also fixes $sx$, we may arrive at the presentation
\[\langle \; x, y, s, t \; |  \; x^2, y^2, s^2, t^2, (xy)^2, (st)^2, (ty)^{2}, (sx)^{4}, (ys)^2, (tx)^2, 
y(sx)^2 \rangle \]
which is $H_{2,5}$ in Theorem \ref{thm:main-even}. Otherwise, $t$ inverts $sx$, giving
\[\langle \; x, y, s, t \; |  \; x^2, y^2, s^2, t^2, (xy)^2, (st)^2, (ty)^{2}, (sx)^{4}, (ys)^2, (stx)^2, 
y(sx)^2 \rangle \]
which is the presentation of the group $H_{2,6}$ in Theorem \ref{thm:main-even}.

Now let us suppose that $ty \in D_\ell$. As before, we consider when $ty$ fixes $sx$, that is in this case $ty = (sx)^2$, the central element in $D_\ell$. If one (and hence both) of $y$ and $t$ invert $sx$ then $tsxt=xs$ so $txt=sxs$. This leads to $ty=sxsx=txtx$ which implies $y=xtx$ so $y=t$, a contradiction to the distinctness of the canonical involutions. In the other case both $y$ and $t$ fix $sx$. This is equivalent to $y$ commuting with $s$, and $t$ commuting with $x$, and this gives us the following presentation which yields a group of the right order.
\[\langle \; x, y, s, t \; |  \; x^2, y^2, s^2, t^2, (xy)^2, (st)^2, (ty)^{2}, (sx)^{4},
(ys)^2, (tx)^2, ty(sx)^2 \rangle \]
This is listed as $H_{2.7}$ in Theorem \ref{thm:main-even}. You may notice this presentation has a redundant relator, which serves to make clear the underlying full regularity of the corresponding map.

On the other hand consider when $ty$ inverts $sx$, and hence exactly one of $y$ or $t$ must fix $sx$. Up to twinness we may assume $y$ fixes $sx$ in which case $y$ is central in $H$ and $(ys)^2$ is a relator. In this case then $t$ inverts $sx$ and hence $(tsx)^2$, or equivalently $(stx)^2$, is also a relator.

For $ty \in D_\ell$ to invert $sx$ we must have $ty \in \{sxs,x,xsx,s \}$. The first two options yield a contradiciton, as we shall now see.
If $ty=sxs$ then $1=tysxs=ysys$ so $ytysx=sy$, that is $tsx=sy$ and hence $tx=y$. But then $ty=x=sxs$ which means $xs=sx$, contradicting the order of $sx$.
If $ty=x$ then $1=tyx=(tsx)^2$ so $y=sxts$, that is $x=yt=sxs$, the same contradiction.

It can be checked that the remaining options, namely $ty$ is either $xsx$ or $s$, avoid the contradictions which cause the order of the group $H$ to become too small, and they yield these two remaining canonical presentations.
\[\langle \; x, y, s, t \; |  \; x^2, y^2, s^2, t^2, (xy)^2, (st)^2, (ty)^{2}, (sx)^{4}, (ys)^2, (stx)^2, 
tyxsx \rangle \]
\[\langle \; x, y, s, t \; |  \; x^2, y^2, s^2, t^2, (xy)^2, (st)^2, (ty)^{2}, (sx)^{4}, (ys)^2, (stx)^2, 
tys \rangle \]
These are listed respectively as $H_{2.8}$ and $H_{2.9}$ in Theorem \ref{thm:main-even}. Careful comparisons of the relators in the above presentations shows that the maps are pairwise non-isomorphic.

\smallskip

The last possibility to address is the type $(4,6)$ for a group $H$ of order $3\times 8=24$. Suppose the Sylow $3$-subgroup is not normal in $H$. Then $H \in \{ SL(2,3), S_4, A_4\times C_2\}$. Now, we know $H$ has more than one involution, so it cannot be isomorphic to $SL(2,3)$. Also, $A_4$ only contains $3$ involutions, all in the subgroup which is isomorphic to $V_4$, but $H$ is generated by involutions, $H$ so cannot be isomorphic to $A_4 \times C_2$. Thus, if the Sylow $3$-subgroup is not normal in $H$, then $H\cong S_4$.
\smallskip

We will now refer to the standard permutation representation of $S_4$ on the set $\{1,2,3,4\}$ and remember the sets $T_1$ -- $T_4$ introduced in the first part of the proof of Proposition \ref{prop:three} for $p=3$. Since $\langle s,x \rangle = D_\ell \cong D_6$, we may assume, up to choice of notation and hence without loss of generality, that $x$ and $s$ are $(12)$ and $(23)$, respectively. The condition that $\langle y,t \rangle \cong V_4$ means that $y = (12)(34)$, and $t = (14)(23)$, arriving at the non-orientable map given by the presentation listed as $H_{2,10}$ in Theorem \ref{thm:main-even}:
\[\langle \; x, y, s, t \; |  \; x^2, y^2, s^2, t^2, (xy)^2, (st)^2, (ty)^{2}, (sx)^{3}, t(sy)^2, y(xt)^2 \rangle \cong S_4 \]

Now suppose the Sylow $3$-subgroup $\langle sx \rangle \cong C_3$ is normal in $H$. Then conjugation by each of $y$ and $t$ must either fix or invert $sx$.
\smallskip

Suppose $y$ inverts $sx$. Then $ysxy=xs$ so $(ysx)^2$ is a relator. If $t$ also inverts $sx$ then $tsxt=xs$. Then $txt=sxs$  and hence $txtx=sxsx$ but $y$ commutes with both $t$ and $x$ so $txtx=ytxtxy=ysxsxy=xsxs$ is self-inverse. But $sxsx$ has order three so this is a contradiction. If, on the other hand, $t$ fixes $sx$ then we obtain the presentation of the group $H_{2,11}$ from Theorem \ref{thm:main-even}:
\[ \langle \; x, y, s, t \; |  \; x^2, y^2, s^2, t^2, (xy)^2, (st)^2, (ty)^{2}, (sx)^{3}, (ysx)^2, (tx)^2
\rangle .\]

Suppose finally that $y$ fixes $sx$, in which case $ysxy=sx$ so $(ys)^2$ is a relator. In the case where $t$ also fixes $sx$, this leads to the presentation of the group $H_{2,12}$ from Theorem \ref{thm:main-even}:
\[\langle \; x, y, s, t \; |  \; x^2, y^2, s^2, t^2, (xy)^2, (st)^2, (ty)^{2}, (sx)^{3}, (ys)^2,
(tx)^2 \rangle \]
Otherwise, $t$ inverts $sx$, giving the presentation
\[ \langle \; x, y, s, t \; |  \; x^2, y^2, s^2, t^2, (xy)^2, (st)^2, (ty)^{2}, (sx)^{3}, (ys)^2, (txs)^2
\rangle \]
which represents the twin map of the edge-biregular map determined by the group $H_{2,11}$.

\smallskip

It remains to address isomorphism type, full regularity and orientability of the twelve edge-biregular maps identified above. Obviously, $H_{2,1}\cong D_8$, $H_{2,2} \cong D_{12} \cong H_{2,3}$ and $H_{2,7}\cong S_4$. Observe that the element $t \notin \langle s,x \rangle$ is central in $H_{2,4}$ and $H_{2,5}$ while $st \notin \langle s,x \rangle$ is central in $H_{2,6}$, and $y \notin \langle s,x \rangle$ is central in $H_{2,7}$, $H_{2,8}$ and $H_{2,9}$ and so the six groups are all isomorphic to $D_8\times C_2$. Further, it follows from the derivation of the remaining two groups that $H_{2,11}= \langle s,x\rangle \rtimes \langle t,y\rangle = \langle s,x\rangle \times \langle t,xy \rangle \cong D_6 \times V_4$ and $H_{2,12}= \langle s,x\rangle \times \langle t,y\rangle \cong D_6\times V_4$. It is easy to check that the twelve presentations are correct and complete (describing exactly the groups listed, albeit including some redundant relators). The maps defined by $H_{2,1}$, $H_{2,3}$, $H_{2,4}$, $H_{2,7}$, $H_{2,11}$ and $H_{2,12}$ are orientable because all their defining relations have even length in the generators $x,y,s,t$, which does not apply to the remaining six maps. Finally, the maps associated with $H_{2,1}$, $H_{2,3}$, $H_{2,7}$, $H_{2,10}$, and $H_{2,12}$ are the only fully regular ones out of the above twelve, as their groups admit an automorphism swapping $x$ with $s$ and $y$ with $t$. The proof of Theorem \ref{thm:main-even} is now complete. \hfill $\Box$
\smallskip

\smallskip

\section{The case when $p$ does not divide the order of $H$}\label{sec:nodiv}

Let $M=(H;x,y,s,t)$ be a (finite) edge-biregular map of type $(k,\ell)$ on a surface of Euler characteristic $-p$ for some prime $p$; in view of Theorem \ref{thm:main-even} we will assume that $p$ is odd and hence the surface is non-orientable. Recall that the group $H$ is assumed to be presented as in \eqref{eq:H}, and its order together with the type of the map and the characteristic of the surface are tied by the equation \eqref{eq:Eu}. We begin with an auxiliary result and omit a proof since it is almost verbatim the same as the proof of Lemma 3.2 of \cite{CPS}.

\begin{lem}\label{lem:CPS} If $q$ is a prime divisor of $|H|$ relatively prime to the Euler characteristic, then Sylow $q$-subgroups of $H$ are cyclic if $q$ is any odd prime, and dihedral if $q=2$. \hfill $\Box$
\end{lem}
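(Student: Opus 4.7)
The plan is to analyze the Sylow structure of $H$ by pulling back through the canonical projection $\pi:\tilde G\to H$, where $\tilde G=\langle R_0,\overline R_1,R_2\rangle$, and then reading off constraints from the NEC signature of the preimage. Specifically, let $Q$ be a Sylow $q$-subgroup of $H$ and set $\tilde Q=\pi^{-1}(Q)$, so that $N\le\tilde Q\le\tilde G$ with $\tilde Q/N\cong Q$. Since $N$ is torsion-free, every torsion element of $\tilde Q$ maps isomorphically onto a torsion element of $Q$ of the same order, and so has order a power of $q$. In particular, for odd $q$ the group $\tilde Q$ contains no involutions and hence no reflections, although glide reflections of infinite order may still occur, producing a (possibly non-orientable) closed orbifold $\mathbb H/\tilde Q$ with only cone points.

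Write $|Q|=q^a$. Because $Q$ acts on the carrier surface $S=\mathbb H/N$ of Euler characteristic $-p$ with quotient orbifold $\mathbb H/\tilde Q$, the orbifold Riemann--Hurwitz identity reads
\[
-p \;=\; \chi(S) \;=\; |Q|\cdot\chi_{\mathrm{orb}}(\mathbb H/\tilde Q).
\]
Expanding the right-hand side according to the signature of $\tilde Q$, with cone-point orders $m_i$ and (for $q=2$) corner orders $n_j$ on the boundary components coming from reflections (all orders being powers of $q$ by the previous paragraph), one obtains an equation of the form
\[
-p \;=\; q^{a}\chi_{0} \;-\; \sum_{i}\bigl(q^{a}-q^{\,a-c_i}\bigr)\;-\;\tfrac{1}{2}\sum_{j}\bigl(q^{a}-q^{\,a-d_j}\bigr),
\]
where $m_i=q^{c_i}$, $n_j=q^{d_j}$, and $\chi_0$ is the Euler characteristic of the underlying surface of $\mathbb H/\tilde Q$.

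For odd $q$ the last sum is empty, and reducing modulo $q$ leaves $-p\equiv c\pmod q$, where $c$ is the number of cone points with $c_i=a$. As $\gcd(p,q)=1$, one has $c\ge 1$, so $\tilde Q$ contains an elliptic element of order $q^a=|Q|$ whose image generates $Q$; hence $Q$ is cyclic. For $q=2$ the corner terms are present, and a parallel mod-$2$ analysis of the identity, tracking cone and corner contributions together with the parity of the coefficient $\tfrac12$, shows that either a cone point of order $|Q|$ arises (making $Q$ cyclic, hence trivially dihedral), or else two reflections together with a full-order corner rotation must be present, contributing a dihedral subgroup of order $|Q|$ to $\tilde Q$; since $Q$ is itself a $2$-group, this forces $Q$ to be dihedral.

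The main obstacle is precisely the $q=2$ bookkeeping: one has to rule out the possibility that the corner contribution spreads over several smaller stabilizers in a way that would allow a non-dihedral $2$-group to appear, and this is exactly the step carried out in \cite{CPS}, Lemma~3.2, whose argument transfers here essentially without change because the edge-biregular setup differs from the fully regular setup only in the choice of index-two subgroup of the full triangle group, not in the NEC machinery used.
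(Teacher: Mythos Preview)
The paper itself gives no proof of this lemma; it simply remarks that the argument is almost verbatim that of Lemma~3.2 of \cite{CPS} and omits it. Your proposal follows exactly the Riemann--Hurwitz strategy of \cite{CPS}: pull a Sylow $q$-subgroup $Q$ back to an NEC subgroup $\tilde Q$ containing the torsion-free kernel $N$, expand $-p=|Q|\cdot\chi_{\mathrm{orb}}(\mathbb H/\tilde Q)$ according to the signature, and reduce modulo $q$. For odd $q$ your computation is clean and correct: the congruence $-p\equiv c\pmod q$ with $\gcd(p,q)=1$ forces a cone point of maximal order $q^a$, giving a cyclic subgroup of $\tilde Q$ of order $|Q|$ that maps isomorphically onto $Q$.

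One small slip: in the $q=2$ discussion you write that a cone point of full order would make $Q$ ``cyclic, hence trivially dihedral''. A cyclic $2$-group of order at least $4$ is not dihedral, so this is not a valid inference. In the present setting the cyclic case is in fact impossible, because $H$ contains $\langle x,y\rangle\cong V_4$, so every Sylow $2$-subgroup of $H$ contains a Klein four-group and therefore has more than one involution; you should say this rather than conflate cyclic with dihedral. Apart from that, your decision to defer the detailed $q=2$ signature bookkeeping to \cite{CPS} matches exactly what the paper does, and your observation that only the choice of index-two subgroup of the full triangle group changes (not the NEC machinery) is precisely why the transfer is legitimate.
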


From this point on until the end of this section we will assume that $p$ is {\em not} a divisor of the order of $H$. Since we are working up to duality, instead of $k\le \ell$ we henceforth assume that {\sl the $2$-part of $k$ is not smaller than the $2$-part of $\ell$}. Remember also that $H$ contains a subgroup isomorphic to $V_4$ and so $4$ divides $|H|$. Comparing this condition with equation (\ref{eq:Eu}) allows us to set $k=4 \kappa$ and $\ell=2 \lambda$ for integers $\kappa$ and $\lambda$. The stage will be set by proving solvability of $H$.

\begin{prop}\label{prop:solv}
If $p$ is such that $p\nmid |H|$, the group $H$ is solvable.
\end{prop}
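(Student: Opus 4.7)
\medskip

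\noindent\textbf{Proof plan.} The strategy is to apply the Gorenstein--Walter classification of finite groups with dihedral Sylow $2$-subgroup and then rule out the non-solvable possibilities using the canonical presentation of $H$.

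By Lemma \ref{lem:CPS}, since $p\nmid|H|$, every odd-order Sylow subgroup of $H$ is cyclic and every Sylow $2$-subgroup of $H$ is dihedral (allowing the degenerate cases $C_1,C_2,V_4$). Let $O(H)$ denote the largest normal subgroup of $H$ of odd order. As all its Sylow subgroups are cyclic, $O(H)$ is a $Z$-group, hence metacyclic and in particular solvable; therefore $H$ is solvable precisely when $H/O(H)$ is. A Sylow $2$-subgroup of $H/O(H)$ is again dihedral, and by the Gorenstein--Walter theorem, $H/O(H)$ is either solvable, isomorphic to $A_7$, or isomorphic to $PSL(2,q)$ for some odd $q\ge 5$.

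Assume for contradiction that $H$ is not solvable, so one of the last two alternatives holds. Cyclicity of odd Sylow subgroups in $H$ descends to the quotient, which at once rules out $A_7$ (whose Sylow $3$-subgroup is $C_3\times C_3$). Writing $q=r^f$ with $r$ odd prime, the Sylow $r$-subgroup of $PSL(2,q)$ is elementary abelian of rank $f$, forcing $f=1$; hence we are reduced to showing $H/O(H)\cong PSL(2,r)$ is impossible for any odd prime $r\ge 5$.

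Excluding this remaining case is the main obstacle. Here I would project the canonical generators onto $PSL(2,r)$ and exploit the relations $(\bar x\bar y)^2=(\bar s\bar t)^2=1$, which force the pairs $\bar x,\bar y$ and $\bar s,\bar t$ into Klein four subgroups, together with $\langle\bar x,\bar s\rangle$ and $\langle\bar y,\bar t\rangle$ being dihedral quotients of $D_\ell$ and $D_k$. In $PSL(2,r)$ all involutions are conjugate, and each involution lies in a unique maximal dihedral subgroup of order $r\pm 1$, so the possible configurations for the four projected involutions are heavily restricted. Combining this with the bound $|H|\le 12p$ derived from \eqref{eq:Eu}, the divisibility $|PSL(2,r)|=r(r^2-1)/2$ divides $|H|$, and the condition $p\nmid|H|$ (which in particular forces $r\ne p$), should force a contradiction in every remaining case, most naturally via a case split on $r\bmod 8$ governing the Sylow $2$-structure of $PSL(2,r)$.
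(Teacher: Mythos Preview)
Your route via Gorenstein--Walter is genuinely different from the paper's, and the reduction to $H/O(H)$ being (essentially) $PSL(2,r)$ with $r$ an odd prime is sound, with one imprecision: Gorenstein--Walter allows $H/O(H)$ to be any subgroup of $P\Gamma L(2,q)$ containing $PSL(2,q)$, so for $q=r$ prime you must also dispose of $PGL(2,r)$. The real issue, however, is that the final step---ruling out a $PSL(2,r)$ quotient---is only a sketch. The tools you list do not obviously close it: the inequalities $|PSL(2,r)|\mid |H|\le 12p$ together with $p\nmid|H|$ do not by themselves exclude, for instance, $r=5$ and $p=7$ (as $60\le 84$ and $7\nmid 60$), so you would genuinely need the configuration analysis of the four projected involutions inside $PSL(2,r)$ that you allude to, and this is not carried out. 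It may well be completable, but as written it is a gap.

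The paper bypasses all of this with a short structural argument. First, an elementary centralizer computation shows $|D_k\cap D_\ell|\le 4$: any element of order $\ge 3$ in the intersection commutes with $sx$, $yt$ and (via non-orientability) $xy$, hence is central in $H$, contradicting the size of the centre of a dihedral group. Feeding this into the Euler relation \eqref{eq:Eu} with $p\nmid|H|$ forces $|H|=|D_k||D_\ell|/|D_k\cap D_\ell|$, so $H=D_kD_\ell$ is literally a product of two dihedral subgroups. Solvability is then immediate from Huppert's theorem that a product of two dihedral groups is solvable. This is both lighter machinery (no CFSG-adjacent input) and yields the extra structural fact $H=D_kD_\ell$ with $|H|=k\ell/(2c)$, $c\in\{1,2\}$, which the paper exploits repeatedly in the subsequent analysis; your approach, even if completed, would not give you that for free.
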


\begin{proof}
We start by proving that $|D_k \cap D_\ell| \leq 4$. The group $K = D_k \cap D_\ell$ is obviously cyclic or dihedral. Suppose for a contradiction that $|K| >4$. Then $K$ contains an element $z$ of order at least $3$, so that $z =(sx)^m = (yt)^n$ for some $m$ and $n$. Clearly, $z$ commutes with $sx$ and also with $yt$. We also have $(xy) z (xy) = yx(sx)^m xy = y(xs)^m y = y (ty)^n y = (yt)^n = z$, so $z$ commutes with $xy$ as well. Now the map is carried by a non-orientable surface and so $H = \langle xy,sx,yt \rangle$ by the theory explained in section \ref{sec:alg}. Hence $z$ commutes with all the canonical generators, and thus it is central in $H$. Specifically, $z$ is central in $D_k \leq H$. It is well-known that the centre of a (non-Abelian) dihedral group is either trivial or has order $2$. But, by our assumption, the order of $\langle z \rangle$ is greater than $2$, a contradiction.
\smallskip

Next we prove that the group $H$ is a product of $D_k$ and $D_\ell$. By \eqref{eq:Eu} the assumption $p\nmid |H|$ implies that $p$ must be absorbed by the denominator of $\nu(k,\ell)$, that is, $k\ell-2(k+\ell) = rp$ for some integer $r$. Thus, $|H| = 2k\ell/r$, but by the first part of the proof we also have $|H| \geq |D_k| |D_\ell|/|D_k \cap D_\ell| \geq k\ell/4$, implying that $r\leq 8$. Using $k=4\kappa$ and $\ell=2 \lambda$ as agreed before, one has $rp = kl-2(k+l) = 8\kappa \lambda - 8\kappa - 4 \lambda = 4(2\kappa \lambda- 2\kappa - \lambda) = 4cp$. Notice that this, along with the assumption $p\nmid |H|$, also implies ${\rm gcd}(2\kappa, \lambda) = c \leq 2$. Hence $|H| = k\ell/(2c)$ where $c = {\rm gcd}(2\kappa, \lambda) \in \{1,2\}$.
\smallskip

If $c = 1$, then $|H| = k\ell/2$ and also $\lambda$ must be odd, so that $|D_k \cap D_\ell | \leq 2$. We also have $|H| \geq |D_k| |D_\ell|/|D_k \cap D_\ell|\geq k\ell/2$ and so equality holds throughout and hence $H = D_k D_\ell$ if $c=1$. In the case when $c = 2$ we have $(2\kappa \lambda- 2\kappa - \lambda) = 2p$ where $p$ is an odd prime, so $\lambda$ must be even; also, ${\rm gcd}(\kappa, \lambda/2)=1$. Now, $c = 2$ implies $|H| = k\ell/4$ and $|D_k \cap D_\ell | \leq 4$, and as we also have $|H| \geq |D_k| |D_\ell|/|D_k \cap D_\ell| \geq k\ell/4$ we conclude that equality holds throughout and hence $H = D_k D_\ell$.
\smallskip

We may now complete the proof by invoking the result of Huppert \cite{Hup} that the product of two dihedral groups is solvable.
\end{proof}

\smallskip

The fact that $H$ is solvable yields that it has a non-trivial Fitting subgroup $F$; recall that $F$ is the largest nilpotent normal subgroup of $H$. In particular, $F$ is a direct product of its Sylow subgroups. By what we know about the Sylow subgroups of $H$ from Lemma \ref{lem:CPS} we have $F=F_1\times F_2$, where $F_1$ is cyclic, of odd order (possibly trivial), and $F_2$ (if non-trivial) is a cyclic or a dihedral $2$-group; we will henceforth split our analysis according to this dichotomy.
\smallskip

As a general remark, observe that we may assume $F\ne H$. Indeed, if $F=H$, then $F_1$ would have to be trivial (otherwise $F$ could not be generated by involutions) and $F_2$ would have to be non-cyclic (to contain enough distinct involutions), so that $H=F=F_2$ would have to be dihedral. But edge-biregular maps with dihedral groups $H$ have already been classified in \cite{Jea}. Without giving details we just state that, as a consequence of the table displaying the classification results in \cite{Jea}, the only edge-biregular maps of Euler characteristic $-p$ for an odd prime $p$ determined by a dihedral group of automorphisms are the first two maps in Theorem \ref{thm:main-odd} defined by the groups $H_{p(1)}$ and $H_{p(2)}$.

\subsection{The case when the Fitting subgroup is cyclic}\label{sub:cyc}
\medskip

From now on we will {\sl assume that $F$ is cyclic}. In such a case $F$ contains either no involution (if $|F|$ is odd) or a unique involution (if $F_2$ is non-trivial cyclic $2$-group). This implies that $F$ can contain at most one of the four involutions $s,t,x,y$.
\smallskip

We will show that $F$ cannot have index $2$ in $H$. Indeed, suppose $[H:F]=2$. If $F$ contains no generating involution from the set $\{s,t,x,y\}$, then the index-2 condition implies that $sx,xy,yt\in F$. But since $H=\langle sx,xy,yt\rangle$ we would have $F=H$, a contradiction. Thus, let one of $\{s,t,x,y\}$ be the unique generating involution of $H$ contained in $F$. We may without loss of generality assume that this element is $y$, as all the other cases are handled by symmetries in the forthcoming argument. Now $\langle y\rangle$ is characteristic in $F$ and therefore normal in $H$, and so $y$, being an involution, is also central in $H$. Further, from $s,t\notin F$ we have $st\in F$, and by uniqueness of the involution in $F$ it follows that $y=st$.
\smallskip

As now $s,x\notin F$, we have $sx\in F$ and so $F$ contains the cyclic group $\langle sx\rangle$. If its order is odd, then $F$ also contains the cyclic group $K=\langle sx\rangle\langle y\rangle$. But then, recalling centrality of $y$ in $H$, conjugation by, say, $s$ inverts every element of $K$ and so $L=K\rtimes \langle s\rangle$ is a dihedral group. From $s,xs,y\in L$ and $y=st$ we also have $t,x\in L$ and so $L=H$ is dihedral, the case we have already disposed of. If the order of $\langle sx\rangle$ is even, then $y=(sx)^j$ for $j$ equal to half of the order of $sx$, since both elements are an involution in the cyclic group $F$. But we saw earlier that $y=st$, giving $st = (sx)^j$ and hence $t=s(sx)^j$. This, however, with $y=(sx)^j$ shows that $H=\langle s,x\rangle$, which again means that $H$ is dihedral.
\smallskip

Altogether, we have shown that for non-dihedral $H$ and for a cyclic $F$ we must have $[H:F] > 2$.
Now, by the available theory $H/F$ embeds in ${\rm Aut}(F)$, and as the latter is Abelian if $F$ is cyclic, we conclude that $H/F$ is Abelian. But $H/F$ is generated by four elements of order at most $2$, so that $H/F\cong C_2^m$ for some $m$ such that $2\le m\le 4$. Further, since $H=\langle st,sx,xy\rangle = \langle st,ty,xy\rangle$ it follows that $m\in \{2,3\}$ and both $sxF$ and $tyF$ have order at most $2$, so that $(sx)^2,(ty)^2\in F$.
\smallskip

From earlier calculations we recall that both $k,\ell$ are even and greater than $2$, and assuming that the $2$-part of $k$ is not smaller than the $2$-part of $\ell$ we have the following: {\sl If $c={\rm gcd} (k/2, \ell/2)$, then $c\in \{1,2\}$, $k$ is a multiple of $4$, and $|H|=k\ell/(2c)$.} In particular, note that $8\nmid \ell$, and $c=1$ or $2$ according as $\ell/2$ is odd or even.
\smallskip

Under these conditions we first show that $ty\notin F$. Indeed, suppose that $ty\in F$. Observe that the order of $(sx)^c$ is $\ell/(2c)$ and hence odd, so that $(sx)^2\in F$ implies $(sx)^c\in F$ also for $c=1$. As $ty,(sx)^c\in F$ and the orders of the two elements, $k/2$ and $\ell/(2c)$, are relatively prime, it follows that $(k/2)(\ell/(2c)) \le |F| = |H|/[H:F] \le k\ell/(2c[H:F])$, which yields $[H:F]\le 2$, a contradiction.
\smallskip

It follows that $ty\notin F$. But we know that $(ty)^2\in F$, and we may use essentially the same chain of inequalities as above, with $k/2$ replaced by $k/4$ (which is the order of $(ty)^2$), to conclude that $[H:F]\le 4$. We saw, however, that $[H:F]$ is a power of $2$ and greater than $2$, so that $[H:F]=4$, and we must have equalities in the above chain throughout. In more detail, and using the fact that $F$ is cyclic, we have $F=\langle (sx)^c\rangle\langle(ty)^2\rangle = \langle (sx)^c(ty)^2 \rangle\cong C_n$ for $n=k\ell/(8c)$, with $ty\notin F$. Observe that the order of $(sx)^2$ is odd in both cases for $c\in\{1,2\}$, and is and relatively prime to $k/4$ (the order of $(ty)^2$).
\smallskip

We show that $F$ contains none of the generating involutions $s,t,x,y$ of $H$, and at most one of the involutions $st$, $xy$. For if $F$ contained one of $t,y$, then this element would have to coincide with the central involution $(ty)^{k/4}$, but such an equality quickly gives $t=1$ or $y=1$. If $s\in F$ (and the case $x\in F$ is done similarly), then $s$ would commute with $(sx)^2$, which is equivalent to $(sx)^4=1$. Then, $\ell/2$ would have to divide $4$ and since $8\nmid \ell$ we would have $\ell=4$. But this would give $H=k\ell/4=k$, so that $H$ would be dihedral.

To address the remaining part, note that by non-orientability we know that $H=\langle st,xy,yt\rangle$. From $H/F=\langle stF,xyF,ytF\rangle$ and $yt\notin F$ while $(yt)^2\in F$, together with $[H:F]=4$, it follows that at most one of $st$, $xy$ can be contained in $F$.
\smallskip

In what follows we will without loss of generality assume that $st\notin F$. As $F$ and $\langle s,t\rangle$ now intersect trivially, with the help of the above finding this means that the semi-direct product
\begin{equation}\label{eq:F:V4st}
F\rtimes \langle s,t\rangle = \langle (sx)^c(ty)^2 \rangle \rtimes \langle s,t\rangle \cong C_n\rtimes V_4
\end{equation}
for $n=k\ell/(8c)$ has order $4n=|H|$ and so $H=F\rtimes \langle s,t\rangle \cong C_n\rtimes V_4$. If $n=|F|$ is even, then the unique non-trivial involution $(ty)^4\in F$ generates a subgroup isomorphic to $C_2$ that is characteristic in $F$ and hence normal in $H$, which means that $(ty)^4$ is central in $H$. But then $H$ would contain the subgroup $\langle(ty)^{k/4} \rangle \times \langle s,t\rangle \cong C_2^3$, contrary to the fact that $H$ has dihedral Sylow $2$-subgroups. It follows that $n$ is odd and so is $\kappa=k/4$ (and we know the same about $\ell/(2c)=\lambda/c$); also, both $xy$ and $st$ must lie outside $F$, and the Sylow $2$-subgroups of $H$ are isomorphic to $V_4$.
\smallskip

In the proof of solvability of $H$ we have encountered the equation $2\kappa\lambda - 2\kappa -\lambda = cp$ for some $c\in\{1,2\}$. If $c=2$, then $\lambda$ is exactly divisible by $2$, and then oddness of $\kappa$ with $2\kappa(\lambda-1) = 2p +\lambda$ gives a contradiction as the right-hand side is divisible by $4$ while the left-hand side is not. It follows that $c=1$, and so $sx\in F$; observe that then $sy\notin F$ as in the opposite case we would have $(xs)(sy)=xy\in F$ which has already been excluded.
\smallskip

We now let $u=sx$ and $v=ty$; note that our cyclic group $F$, of order a product of two odd and relatively prime numbers $\ell/2$ and $k/4$ (the orders of $u$ and $v^2$), is generated e.g. by $uv^2$. We saw that $y,ys,yt\notin F$, so that by \eqref{eq:F:V4st} for $c=1$ we must have $y=wst$ for some $w\in F$. The fact that $w=yts$ commutes with $u=sx$ is equivalent to $ty(sx)yt=xs= u^{-1}$, so that conjugation by $v$ inverts $u$. Similarly, $w=yts$ commutes with $v^2\in F$, which translates to $[s,v^2]=1$, and as $u=sx$ commutes with $v^2$ we also have $[x,v^2]=1$.
\smallskip

In somewhat more detail, let $w=u^a(v^2)^b=u^av^{2b}$ for uniquely determined integers $a,b$ such that $0\le a< \ell/2$ and $0\le b< k/4$. Using the facts that $s$ inverts $u$ and commutes with $v^2$ and $[u,v^2]=1$, from $y=wst$ it follows that $v^{-2}=(yt)^2= (u^av^{2b}s)^2 = v^{4b}$, so that $v^{4b+2}=1$, and for $b$ in the above range we have $4b+2=k/2$ (the order of $v$) and so $b=(k-4)/8$. Normality of $\langle u\rangle$ in $H$ (being characteristic in $F$) further implies that $yuy=u^j$ for for some $j$, $1\le j< \ell/2$, such that $j^2 \equiv 1$ mod $\ell/2$. Since $v$ inverts $u$, we obtain $u^{-1}=tyuyt = tu^jt$, which implies $tut=u^{-j}$. Observing now that conjugation by $st$ maps $u^a$ onto $u^{aj}$ and inverts $v^2$, from $y=u^av^{2b}st$ we obtain $1=(u^av^{2b}st)^2=u^{a(j+1)}$, so that $a(j+1)\equiv 0$ mod $\ell/2$.
\smallskip

Going one step further and using properties derived above, from $y=u^av^{2b}st$ we have $xy = xu^asv^{2b}t = u^{-a-1}v^{2b}t$, and as conjugation by $t$ inverts $v^2$ and maps $u$ onto $u^{-j}$ one obtains $1=(u^{-a-1}v^{2b}t)^2 = u^{(a+1)(j-1)}$, which gives $(a+1)(j-1)\equiv 0$ mod $\ell/2$. Subtracting the last congruence from $a(j+1)\equiv 0$ mod $\ell/2$ yields $2a+1\equiv j$ mod $\ell/2$. As $2^{-1}=(\ell+2)/4$ mod $\ell/2$ (which is odd), it follows that $a \equiv (j-1)2^{-1}=(j-1)(\ell+2)/4$ mod $\ell/2$, giving a unique value of $a\in \{0,1,\ldots, \ell/2-1 \}$. Observe also that for this value of $a$ and for any $j$ such that $j^2\equiv 1$ mod $\ell/2$ one has $(j+1)a = (j^2-1)2^{-1} \equiv 0$ mod $\ell/2$, which is the congruence obtained earlier.
\smallskip

The last step will be reintroducing the notation $k=4\kappa$ and $\ell=2\lambda$ for odd and relatively prime $\kappa$ and $\lambda$, and observing that $2b+1 = \kappa$ and so $1=tyv^{2b}u^as =v^\kappa u^as$. The last relation is equivalent to $v^{\kappa-1}u^a=v^{-1}s$, and from $[u,v^2]=1$ (a consequence of $[s,v^2]=1=[x,v^2]$) and oddness of $\kappa$ it follows that $1=[u,v^{\kappa-1}u^a] = [u,v^{-1}s]$ and commutation of $u$ and $v^{-1}s$ is equivalent to $u$ being inverted by conjugation by $v$. Summing up, the above facts well-define a group $H=H_{p,j}$ of order $k\ell/2$ generated by four involutions $s,t,x,y$ and presented as follows, with $u=sx$ and $v=ty$:
\begin{equation}\label{eq:Fcyc}
H_{p,j} = \langle \, x,y,s,t\, | \, x^2,\,y^2,\,s^2,\,t^2,\,(xy)^2,\,(st)^2,\,u^{\lambda},\,
v^{2\kappa}, \, [s,v^2],\,[x,v^2],\, tutu^j,\, v^\kappa u^a s\, \rangle
\end{equation}
for a non-negative integer $j < \ell/2$ such that $j^2\equiv 1$ mod $\lambda$, with $a=(j-1)(\lambda+1)/2$. This is the presentation appearing as a third item in Theorem \ref{thm:main-odd}. Note that here $F=\langle u,v^2\rangle = \langle uv^2\rangle$ is cyclic, of order $\kappa\lambda = k\ell/8$, and $H_{p,j}=F\rtimes \langle s,t\rangle$, as derived earlier.
\medskip

\noindent {\bf Proof of correctness of the presentation \eqref{eq:Fcyc}}
\medskip

Let $G_0=\langle s,x\, |\, s^2,x^2,(sx)^{\lambda}\rangle \cong D_\ell$ and let $u=sx$. Let us introduce a pair of automorphisms $\theta$ and $\tau$ of $G_0$ completely defined by letting $\theta(s)=s$ and $\theta(u)=u^{-j}$ for some $j$ such that $j^2\equiv 1$ mod  $\lambda$, and $\tau(u)=u^j$, $\tau(x)=x$. These definitions imply, for example, that $\theta(x)=su^{-j}$ and $\tau(s)=u^jx$. It can be easily verified that $\theta$ and $\tau$ commute and both are of order two. It follows that we have a well-defined split extension of $G_0$ by the subgroup $V_4\cong \langle \theta,\tau\rangle < {\rm Aut}(G_0)$; the order of the extension is $4\ell$. By general knowledge on split extensions the new group has an equivalent representation in the form $G_1=\langle s,x\rangle\rtimes \langle t,y\rangle$, where $t,y$ are two commuting involutions acting on $G_0$ by conjugation the same way as the two earlier automorphisms do, that is, $\theta(z)=tzt$ and $\tau(z)=yzy$ for every $z\in G_0$. Note that this also implies that the subgroups $G_0$ and $\langle t,y\rangle$ intersect trivially. Further, using $u=sx$ and $v=ty$ it can be verified that in $G_1$ one has the relations $uvu=v$; moreover, we have $[s,t]=[x,y]=1$ by the definition of the two automorphisms and their conversion to conjugations. It follows that the group $G_1$ has a presentation of the form
\begin{equation}\label{eq:aux1}
G_1=\langle x,y,s,t\, | \, x^2,\,y^2,\,s^2,\,t^2,\,(xy)^2,\,(st)^2,\,u^\lambda,\,v^2,\,   tutu^j,\,uvuv^{-1}\rangle
\end{equation}
where, again, $u=sx$, $v=ty$, and $j$ is an integer such that $j^2\equiv 1$ mod $\lambda$.
\smallskip

Next, let us consider the group $G_2$ generated by the same involutions as $G_1$ but with a presentation obtained from that of $G_1$ by omitting the relator $v^2$ and adding the conditions of $v^2$ commuting with $s$ and $x$:
\begin{equation}\label{eq:aux2}
G_2=\langle x,y,s,t\, | \, x^2,\,y^2,\,s^2,\,t^2,\,(xy)^2,\,(st)^2,\,uvuv^{-1},\, u^\lambda,\,tutu^j,\,[s,v^2],\,[x,v^2]\rangle
\end{equation}
The relators $uvuv^{-1}$ and $tutu^j$ imply $yuy=u^j$ and $txt=su^{-j}$, and these together with $tut=u^{-j}$ and $yxy=x$ show that $\langle u,x\rangle = \langle s,x\rangle$ is a normal subgroup of $G_2$. By inspection, in the absence of any condition involving the element $v$ the presentation of $G_2/\langle s,x\rangle$ reduces to $\langle t,y\, |\, t^2,y^2\rangle$, which is an infinite dihedral group; hence $G_2$ is infinite. The subgroup $N=\langle v^2\rangle$ is obviously normal in $G_2$ and as $G_2/N\cong G_1$ we conclude that $N$ is isomorphic to an infinite cyclic group. The subgroup $\kappa N = \{(v^{2\kappa})^i;\ i\in \mathbb{Z}\}$ is characteristic in $N$ and so normal in $G_2$.  Applying the Third isomorphism theorem we obtain $G_2/N \cong (G_2/\kappa N)/(N/\kappa N)$ and so $|G_2/\kappa N|=\kappa |G_2/N|$. Since $G_2/N \cong G_1$, the new group $G_3=G_2/\kappa N$ of order $8\kappa\lambda$ has a presentation obtained from that of \eqref{eq:aux2} by adding the relator $v^{2\kappa}$:
\begin{equation}\label{eq:aux3}
G_3=\langle x,y,s,t\, | \, x^2,\,y^2,\,s^2,\,t^2,\,(xy)^2,\,(st)^2,\,uvuv^{-1},\, u^\lambda,\,tutu^j,\,[s,v^2],\,[x,v^2],\,v^{2\kappa}\rangle
\end{equation}

The last step is to consider the element $z=v^\kappa u^a s\in G_3$, where $a=2^{-1}(j-1)$ mod $\lambda$ as before. For the calculations that follow it is useful to observe that from the earlier congruence $a(j+1)\equiv 0$ mod $\lambda$ we have $aj\equiv -a$ mod $\lambda$, so that $tu^at=u^a$ and $yu^ay=u^{-a}$; note also that $u^{2a+1}=u^j$. We begin by showing that $z$ is an involution. Indeed, using $[u,v^2]=1$, the facts that both $s$ and $v$ invert $u$, and the properties of $u$ listed before one obtains $(u^asv^\kappa)^2 = u^{2a}sv^{-1}v^{\kappa+1}s v^\kappa = u^\lambda u^{-1}sv^{-1}sv = u^\lambda yu^{-1}y = 1$, which yields $z^2=1$.
\smallskip

We show even more; namely, that $z$ is a central involution of $G_3$. To show that $z$ commutes with $s$, note that the above implies that $zs=v^\kappa u^a$ is an involution and so  $(sz)^2= s(v^\kappa u^a)^2s=1$, so that $[s,z]=1$. With the help of the fact that conjugation by $t$ preserves $u^a$ and $v^\kappa$ (note that $v^\kappa = v^{-\kappa}$) we obtain $[z,t]=v^\kappa u^a st\cdot tv^\kappa u^as= z^2=1$. As $\kappa$ is odd and so $v^\kappa$ inverts $u$, it follows that  $[x,z]=xv^\kappa u^a(sx)v^\kappa u^a s = xv^\kappa u^{a+1}v^\kappa u^{a+1}x =1$. Last, using inversion of $u^a$ and preservation of $v^\kappa$ by conjugation by $y$, as well as preservation of $u$ by conjugation by $sv$, one has $[z,y] =v^\kappa u^as v^{\kappa}u^{-a}ysy = v^\kappa u^a v^{\kappa-1} u^{-a}svysy = v^{-1}svysy = 1$.
\smallskip

We have proved that for $z=v^\kappa u^a s$ the subgroup $\langle z\rangle\cong C_2$ is central in $G_3$. But this means that the group $G_3/\langle z\rangle$, which is isomorphic to $H_{p,j}$, has order $|G_3|/2= 4\kappa\lambda = k\ell/2$, as claimed. This completes the proof of correctness of the presentation \eqref{eq:Fcyc}; the relator $uvuv^{-1}$ can be omitted since it is a consequence of the remaining ones, as shown in the paragraph immediately preceding the presentation \eqref{eq:Fcyc}.
\medskip

For completeness we show that none of the edge-biregular maps with automorphism groups $H_{p,j}$ presented as in \eqref{eq:Fcyc} are fully regular. Indeed, in the opposite case $H_{p,j}$ would have to admit an automorphism interchanging $s$ with $x$ and $t$ with $y$. In such a case, however, along with $s=v^\kappa u^a$ the group $H_{p,j}$ would also have to admit the relation $x=v^{-\kappa}u^{-a}= u^av^\kappa$. This would imply $xs=u^{2a}$ and hence $u^{2a+1}=1$, and as $2a+1\equiv j$ mod $\lambda$ we would have $u^j=1$, contrary to $j^2\equiv 1$ mod $\lambda$.
\smallskip

Finally, the Euler-Poincar\'{e} formula yields that $p=2\kappa\lambda -2\kappa - \lambda$ so $p+1 =(2\kappa-1)(\lambda - 1)$. We know $\kappa$ and $\lambda$ are both odd so letting $p+1 = 2^\alpha b d'$, where $b \equiv 1$ mod $4$, and $d'$ is odd, then we have $2\kappa - 1 = b$ and $\lambda - 1 = 2^\alpha d'=d$, which, so long as $\kappa$ and $\lambda$ are coprime, will yield an edge-biregular map as described above.
Hence, for every factorisation $p+1=bd$ such that $b\equiv 1$ mod $4$ and ${\rm gcd}(b+1,d+1)=1$ we have such a map of type $(2(b+1), 2(d+1))$, completing the analysis related to the third item of Theorem \ref{thm:main-odd}.

\subsection{The case when the $2$-part of the Fitting subgroup is dihedral}
\medskip

Recall that we are investigating an edge-biregular map $M=(H;x,y,s,t)$ of type $(k,\ell)$ with both entries even, on a surface of Euler characteristic $-p$ for some odd prime $p$; at the beginning of section \ref{sec:nodiv} we also made the assumption that the $2$-part of $k$ is not smaller than the $2$-part of $\ell$ and we had $k=4\kappa$ and $\ell=2\lambda$. By Proposition \ref{prop:solv} we know that $H$ is a solvable group and so has a non-trivial Fitting subgroup $F$, of which we may assume that $F\ne H$. By earlier results and observations we also know that either $F$ is cyclic, or $F$ has a dihedral $2$-part, denoted $F_2$. We have dealt with the first possibility in subsection \ref{sub:cyc} and {\sl from now on we will assume that $F_2$ is dihedral}, which of course means that $|F_2| \geq 4$. Our next result places a substantial restriction on $F_2$ and $D_k$ (the vertex-stabilizer in $M$).

\begin{prop}\label{prop:F2}
If $F_2$ is dihedral, then $D_k$ is a $2$-group, $[D_k:F_2]=2$, and $k$ is a multiple of $8$ while $\ell/{(2c)}$ is odd.
\end{prop}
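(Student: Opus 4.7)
The strategy is to leverage the normal dihedral 2-subgroup $F_2 = O_2(H)$ against the constraint from Lemma~\ref{lem:CPS} that every Sylow 2-subgroup of $H$ is dihedral. Since $F_2$ is the characteristic Sylow 2-subgroup of the nilpotent Fitting subgroup $F = F_1 \times F_2$, it is normal in $H$, coincides with $O_2(H)$, and hence sits inside every Sylow 2-subgroup $S$ of $H$, each of which is dihedral by Lemma~\ref{lem:CPS}. The resulting inclusion $F_2 \trianglelefteq S$ of dihedral 2-groups is highly restrictive: a direct inspection of the normal subgroup lattice of $D_{2^n}$ shows that a normal dihedral subgroup of order at least $4$ is either the whole group or one of the two index-2 dihedral subgroups, so either $F_2 = S$ or $[S : F_2] = 2$.

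The main task is then to rule out the first alternative and to identify $S$ with $D_k$. First I would dispose of the case $F_2 = S$: under that assumption, the Sylow 2-subgroup $F_2$ would be normal (hence unique) in $H$, and by Schur--Zassenhaus $H = F_1 \rtimes F_2$ with $F_1$ cyclic of odd order. Then $H/F_2 \cong F_1$, being of odd order, contains no non-trivial involution, so the images of $s,t,x,y$ are all trivial in $H/F_2$, forcing $H = F_2 = F$ and contradicting $F \neq H$. This leaves $[S : F_2] = 2$. Next I would show that $D_k$ itself is a 2-group, and hence coincides with $S$; this is the heart of the argument. Supposing that some odd prime $q \neq p$ divides $|D_k| = 4\kappa$, the cyclic rotation subgroup $\langle ty \rangle$ of $D_k$ contains a non-trivial Sylow $q$-subgroup $Q$. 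As $q$ is coprime to $|F_2|$, the only part of $F$ on which $Q$ can act non-trivially is the odd cyclic $F_1$; combining the factorization $H = D_k D_\ell$ from the proof of Proposition~\ref{prop:solv} with the embedding $H/Z(F) \hookrightarrow \mathrm{Aut}(F_1) \times \mathrm{Aut}(F_2)$ lets one track how each of $s,t,x,y$ conjugates $Q$, in a case-analysis analogous to that in Lemma~\ref{lem:ex}, and derive a contradiction with $H$ being generated by the four canonical involutions at the required order $k\ell/(2c)$.

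Once $D_k$ is shown to be a 2-group, it is dihedral of order $k$ and must coincide with the Sylow 2-subgroup $S$, whereupon $F_2 \trianglelefteq D_k$ has index $2$. The formula $|H| = k\ell/(2c)$ then forces the 2-part of $|H|$ to equal $k$, whence $\ell/(2c)$ is odd; and $|F_2| \geq 4$ combined with $[D_k : F_2] = 2$ yields $k \geq 8$, i.e.\ $8 \mid k$. The hardest step will be the middle one: ruling out odd prime divisors of $|D_k|$ requires careful tracking of the actions of all four canonical involutions on the full Fitting subgroup simultaneously, and ruling out exotic semidirect product structures reminiscent of those dispatched by Lemma~\ref{lem:ex}.
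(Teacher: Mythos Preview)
Your overall structure is sound up to the point where you try to show that $D_k$ is a $2$-group; there the proposal takes an unnecessarily hard detour and leaves the key step only as a plan.

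The missing idea is purely arithmetic. From $|H|=k\ell/(2c)$, the standing assumption that the $2$-part of $k$ is at least that of $\ell$, and the constraint $c=\gcd(2\kappa,\lambda)\in\{1,2\}$, one checks that the full $2$-part of $|H|$ equals the $2$-part of $k$. Thus a Sylow $2$-subgroup of $H$ already lies inside $D_k=\langle t,y\rangle$, and hence so does the normal $2$-subgroup $F_2$. Now your own observation about normal dihedral subgroups applies \emph{directly} to $F_2\trianglelefteq D_k$ (it holds in any dihedral group, not only in dihedral $2$-groups): picking any reflection $r\in F_2\le D_k$, normality in $H$ gives $(ty)r(ty)^{-1}\in F_2$ and hence $(ty)^2\in F_2$, so $[D_k:F_2]\le 2$. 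This immediately forces $D_k$ to be a $2$-group, with no need to chase odd primes $q\mid |D_k|$, embed $H/Z(F)$ into $\mathrm{Aut}(F)$, or run a Lemma~\ref{lem:ex}-style case analysis. Ruling out $F_2=D_k$ then proceeds exactly as you describe (the odd-order quotient $H/F_2$ absorbs all four involutory generators, forcing $H$ dihedral), and the parity conclusions on $k$ and $\ell/(2c)$ follow at once.

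Two minor corrections to the parts you did write out. In your treatment of the case $F_2=S$, the Schur--Zassenhaus complement to $F_2$ in $H$ need not be $F_1$; you only obtain $H=F_2\rtimes K$ with $|K|$ odd, which is all the argument actually uses. And your assertion that an odd-order subgroup $Q$ can act non-trivially only on $F_1$ fails when $F_2\cong V_4$, since $\mathrm{Aut}(V_4)\cong S_3$ has order divisible by $3$; so even your sketched route to the ``hard step'' would require at least one further case split there.
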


\begin{proof}
We have assumed that the $2$-part of $k$ is not smaller than that of $\ell$, and we know that $|H|= k\ell/(2c)$ for $c=1,2$. Analysis of equation (\ref{eq:Eu}) with these conditions yields that a Sylow $2$-subgroup of $H$ is contained in $D_k$. As $F_2$ is the Sylow $2$-subgroup of $F$, normality of $F$ in $H$ implies that $F_2$ is a subgroup of $D_k$.

\smallskip

Suppose $F_2 = D_k = \langle t,y\rangle$. Then $H/{F_2}$ is generated by $sF_2$ and $xF_2$, each of which has order less than or equal to two. But $ |H|/|F_2| = |H|/k = \ell/(2c) = \lambda/c$ and since $c = {\rm gcd}(2\kappa,\lambda)\le 2$ it follows that $\lambda/c$ must be an odd integer and hence the group $H/F_2$ does not contain any involution. Hence $s,x \in F_2$ and so $H= D_k$, contrary to our assumption that $H$ is not dihedral.
\smallskip

To show that $[D_k:F_2]\le 2$, up to the choice of labelling of orbits we may assume $F_2 = \langle y, (ty)^\mu \rangle$ for some $\mu$ [the alternative option would be $F_2 = \langle t, (ty)^\mu \rangle$]. Now, $F_2$ is normal in $H$ (because it is characteristic in $F$) and so $y^{ty} = ty y yt = tyt \in F_2$. But $y \in F_2$ also so this means $(ty)^2 \in F_2$, which implies $|D_k : F_2| \leq 2$. Hence $D_k$ is a $2$-group; the conclusions about $k$ and $\ell$ are obvious from the above.
\end{proof}

Since $F_2$ is normal in $H$ and hence also in $\langle t,y\rangle \cong D_k$, we have established that there are only three possibilities for a dihedral $F_2$ if $k\ge 8$: either $F_2=\langle t,y\rangle$, or $F_2$ is one of $\langle t,(ty)^2\rangle$, $\langle y, (ty)^2\rangle$; in particular, $k$ must be a power of $2$. In the first case we have $H/F_2$ generated by (at most) four involutions, but oddness of $\ell/(2c)=|H/F_2|$ implies that the generating involutions $s,t,x,y$ all belong to $F_2$ and hence $H=F_2\cong D_k$; such maps with a dihedral automorphism group have already been sorted out. In what follows we will assume that $F_2=\langle z, (ty)^2\rangle$ for some $z\in \{t,y\}$.
\smallskip

If $k\ge 16$, the cyclic subgroup $\langle (ty)^2\rangle$ of $F_2$, of order $k/4$, is characteristic in $F_2$ and therefore normal in $H$. Note that for $k=8$ and $F_2\cong C_2\times C_2$ this need not be valid. For now we will {\em assume} that $\langle (ty)^2\rangle$ is normal in $H$ also for $k=8$ and we will return to the opposite case later.
\smallskip

Before proceeding we make a remark about the case $c=2$. By the proof of solvability of $H$, for $c=2$ the subgroups $D_k$ and $D_\ell$ intersect in a subgroup isomorphic to $V_4$. For $k\ge 16$ the subgroup $\langle (ty)^{k/4} \rangle \cong C_2$ generated by the centre of $F_2$ is characteristic in $F_2$ and hence normal in $H$, which means that $(ty)^{k/4}$ is a central involution also in $H$. Note that this is now also valid for $k=8$ because of the assumption made in the previous paragraph. If $c=2$ we may also assume that $\ell\ge 12$ (as for $\ell=4$ we  would have $H$ a dihedral group), and so for the central element of $\langle s,x\rangle \cong D_\ell$ we have $(sx)^{\ell/4}=(ty)^{k/4}$.
\smallskip

Normality of $\langle (ty)^2\rangle$ of $F_2$ implies that $s(ty)^2s=(ty)^{2i}$ and $x(ty)^2x=(ty)^{2j}$ for some integers $i,j$ such that $i^2\equiv j^2\equiv 1$ mod $k/4$; the exponents $2$ in the congruences match the orders of $s$ and $x$. It follows that $sx(ty)^2xs = s(ty)^{2j}s = (ty)^{2ij}$. But as $(sx)^{\ell/(2c)}$ is either the identity or a central element of $H$, we must have $(ij)^{\ell/(2c)} \equiv 1$ mod $k/4$. In view of the previous two congruences, oddness of $\ell/(2c)$ implies that $1\equiv (ij)^{\ell/(2c)}\equiv ij$, so that $sx$ commutes with $(ty)^2$. Since $k$ and $\ell/(2c)$ are relatively prime and $k/4$ is even while $\ell/(2c)$ is odd, it follows that the subgroup $J=\langle sx,(ty)^2\rangle$ of $H$ is cyclic, of order $k\ell/(8c)$, and generated by the product $(sx)(ty)^2$.
\smallskip

From the fact that $sx$ commutes with $(ty)^2$ we have $s(ty)^2s=x(ty)^2x$ and this element is in $\langle (ty)^2\rangle$ by normality of this subgroup in $H$. Note that we cannot have $x\in J$; in the opposite case $x$ would have to be equal to $(ty)^{k/4}$, the unique involution in $J$, but then $x$ would commute with $sx$ and hence with $s$, giving $(sx)^2=1$, contrary to $\ell\ge 6c$. Thus, the semi-direct product $K=J\rtimes \langle x\rangle$ is a subgroup of $H$ of order $k\ell/(4c)$, and hence normal (of index $2$) in $H$.
\smallskip

The subgroup $\langle sx\rangle < J$ as the unique cyclic subgroup of $K$ of order $\ell/2$ is characteristic in $K$ and hence normal in $H$. Thus, $y(sx)y=(sx)^a$ and $t(sx)t=(sx)^b$ for some positive integers $a,b < \ell/2$, with both $a,b$ odd if $c=2$. By $[x,y]=1=[s,t]$ we then have $ysy=s(xs)^{a-1}$ and $(sy)^2= (xs)^{a-1}$; similarly, $txt=(xs)^{b-1}x$ and $(tx)^2=(xs)^{b-1}$. By normality of $F_2$ in $H$, the element $sys$ or $xtx$ (depending on whether $y\in F_2$ or $t\in F_2$) is equal to either the central element $(ty)^{k/4}$ or an involution of the form $(ty)^{2j}z$ for some $j < k/4$. In the first case, either $sys$ or $xtx$ would commute with $xs$, which is easily seen to be equivalent to $(sy)^2=1$ or $(tx)^2=1$. In the second case, either $(sy)^2$ or $(tx)^2$ are a power of  $(ty)^2$ and so their order is a power of $2$. But we have also established that $(sy)^2= (xs)^{a-1}$ or $(tx)^2=(xs)^{b-1}$. Both $(xs)^{a-1}$ and $(xs)^{b-1}$ have, however, odd order; this is obvious for $c=1$ and for $c=2$ it follows from oddness of $a$ and $b$. The two order parities can be matched only if $(sy)^2=1$ or $(tx)^2=1$. In both cases we have established that, depending on whether $y\in F_2$ or $t\in F_2$, we have $(sy)^2=1$ or $(tx)^2=1$, i.e., $[s,y]=1$ or $[t,x]=1$.
\smallskip

Next, we show that $ty\notin K$. Indeed, if $ty\in K$, then $K$ would contain the cyclic group $\langle ty\rangle \cong C_{k/2}$ and also the dihedral group $\langle s,x\rangle\cong D_\ell$. By comparing their orders with $|K|=k\ell/(4c)$ it follows that the two groups intersect non-trivially. If $c=2$ then the two groups would have to intersect in a group of order $4$, which would have to be cyclic and dihedral at the same time, a contradiction. It follows that $c=1$ and the only non-trivial element in their intersection is an involution. Since the only involution contained in the cyclic group is the central one, we would have $(ty)^{k/4}=s(sx)^j$ for some $j$. But as the central element commutes with $sx$, the same must hold for $s(sx)^j$ and this is easily seen to be equivalent to $(sx)^2=1$, contrary to the bound on $\ell$. Thus, $ty\notin K$, as claimed. But then, from $H/K= \langle tK, yK \rangle \cong C_2$ it follows that either $t\in K$ or $y\in K$. This means that $K$ contains a dihedral subgroup of order $k/2$, which is in fact the unique Sylow $2$-subgroup $F_2$ of $F$. Hence $F_2<K$ and, moreover, from $[t,x]=1$ or $[s,y]=1$ it follows that $K=\langle sx\rangle\cdot F_2\cong C_{\ell/2} \times D_{k/2}$. (In fact, at this stage it follows that $K$ is the Fitting subgroup $F$ of $H$; to see this one only needs to see that $F\ne H$ but in the opposite case $F/F_2$ would be trivial (being generated by involutions) and we would be back in the case $F=D_k$. However, in the light of the conclusion we will arrive at, the fact that $K=F$ will turn out to be irrelevant.)
\smallskip

To finish this part of our argument we explore the fact that $x\in K$. By the above structural information this means that $x\in F_2$, and so $x$ would commute with $sx$, which is equivalent to $(sx)^2=1$, contrary to our bound on $\ell$.
\smallskip

It thus remains to investigate the case when $k=8$ and the subgroup $\langle (ty)^2\rangle\cong C_2$ of $F_2=\langle z, (ty)^2\rangle\cong C_2\times C_2$ for some $z\in \{t,y\}$ is not a normal subgroup of $H$, with $|H|=k\ell/(2c) = 4\ell/c$ (note that now $8$ exactly divides the order of $H$). For definiteness we will assume that $t\in F_2$; the case when $y\in F_2$ is done in a completely analogous way by replacing $s$ with $x$ and $t$ with $y$ in the subsequent arguments.
\smallskip

Thus, let $F_2=\{ 1,t,yty, (ty)^2\}$; by our assumption $y\notin F_2$. If $x$ was in $F_2$ then clearly $x\ne 1,t$, and as $x=yty$ implies $ty$ has order two, the only possibility would be $x=(ty)^2$. This would mean that $F_2=\langle t,x\rangle$; by normality in $H$, conjugation by $s$ preserves $F_2$. We cannot have $[s,x]=1$ as this contradicts our bound on $\ell$, and since $[s,t]=1$ it follows that $sxs=xt$ and hence $(sx)^4=1$, contrary to $\ell\ge 6c$. Therefore $x\notin F$, and, as we know, $y\notin F$ either, but note that $xy\in F$. Indeed, in the opposite case, by normality of $F_2$ and its trivial intersection with $\langle x,y\rangle$, the subgroup $F_2\rtimes \langle x,y\rangle$ of $H$ would have order $16$, contrary to $8$ exactly dividing the order of $H$. A calculation as above shows that the only option for $xy\in F$ is $xy=(ty)^2$ which is is equivalent to $txty=1$ (hence $tx$ has order $4$). Observe that this relation also shows that conjugation of $F_2$ by $x$ fixes $(ty)^2$, and commutativity of $xy,t\in F_2$ implies $xtx=yty$.
\smallskip

Since $F_2$ is normal in $H$, it is preserved by conjugation by $s$, which fixes $t$. If $s$ fixed $(ty)^2$, then with $x$ fixing $(ty)^2$ the subgroup $\langle (ty)^2\rangle$ would be normal in $H$, contrary to our assumption made in this special case $k=8$. It follows that conjugation by $s$ induces an automorphism of $F_2$ fixing $t$ and transposing $yty$ with $(ty)^2$; the relation $s(yty)s=(ty)^2$ is equivalent to $(syty)^2t=1$. The composition of the two conjugations, namely, $w\mapsto (xs)w(sx)$ for $w\in F_2\cong C_2\times C_2$, induces and automorphism in ${\rm Aut}(C_2\times C_2)\cong S_3$ represented by the cycle $t\mapsto yty\mapsto (ty)^2\mapsto t $ of length $3$. In particular, conjugation by $(sx)^3$ centralizes $t$. If now $\ell/2=3q+r$ for $r=\pm 1$, then $[(sx)^3,t]=1$ implies $[(sx)^r,t]=1$, contrary to the fact established earlier that conjugation by $sx$ does not fix $t$. It follows that $r=0$ and hence $\ell/2$ is a multiple of $3$.
\smallskip

We are now approaching derivation of a presentation of $H$. Observe first that the relation $xtx=yty$ derived earlier simplifies $(syty)^2t=1$ to $(sxy)^2t=1$. From $y=txt$ and the fact that $(sx)^3$ centralizes $F_2$ we obtain $y(sx)^3y = (xs)^3$, so that $\langle (sx)^3\rangle$ is a normal cyclic subgroup of $H$. If $c=2$, then, by the findings in the previous paragraph, the odd integer $\ell/4$ is a multiple of $3$ and so $(sx)^{\ell/4}$ is a central element of $H$ as obviously commutes with $s$ and $x$, and also with $t$ (because $(sx)^3$ does) and hence also with $y=txt$. But then $\langle (sx)^{\ell/4}\rangle$ must be a subgroup of $F_2$, otherwise its product with $F_2$ would be isomorphic to $C_2^3$, contrary to the Sylow $2$-subgroups of $H$ being dihedral. We cannot have $(sx)^{\ell/4}$ equal to $t$ or $yty$ because the two elements do not commute with $y$, so that the only option is $(sx)^{\ell/4}=(ty)^2$, but while the element on the left commutes with $s$ the one on the right does not. It follows that $c=1$ and hence $|H| = 4\ell$.
\smallskip

We note that normality of $\langle (sx)^3\rangle$ cannot be extended to normality of $\langle sx\rangle$, as otherwise we would have $t(sx)t=(sx)^d$ and then $(xt)^2= (sx)^{d-1}$ for some $d$, which are elements of orders of different parity ($4$ versus some (odd) divisor of $\ell/2$). This shows, as an aside, that the Fitting subgroup of $H$ is $F\cong \langle (sx)^3\rangle\times F_2 \cong C_{\ell/6}\times V_4$. It may also be useful to note that $y=txt$ implies that $sy = stxt=t(sx)t$, showing that the orders of $sx$ and $sy$ are the same, namely, $\ell/2$.
\smallskip

This way, in the case when $k=8$ and $F_2=\langle t,yty\rangle$, with $\ell/2=3m$ for some odd $m\ge 1$, we have arrived at a presentation of $H=H_p$ of the form
\begin{equation}\label{eq:Ht}
H_p = \langle x,y,s,t\, | \, x^2,\,y^2,\,s^2,\,t^2,\,(xy)^2,\,(st)^2,\,(sx)^{3m},\,(ty)^4,\, (sxy)^2t,\,txty \rangle
\end{equation}
representing the edge-biregular map $M=M_p$ of type $(k,\ell)=(8,6m)$ that appears in item 4 of Theorem \ref{thm:main-odd}. The corresponding dual map is obtained by interchanging $x$ with $y$ and $s$ with $t$, while the twin map is found by interchanging $x$ with $s$ and $y$ with $t$.
\medskip

\noindent {\bf Proof of correctness of the presentation \eqref{eq:Ht} }
\medskip

It remains to prove that the presentation \eqref{eq:Ht} indeed defines a group of order $k\ell/2 = 24m$. For this we begin with a `universal' group $U$ with relators as in \eqref{eq:Ht} but with $(sx)^{3m}$ omitted:
\begin{equation}\label{eq:Ut}
U=\langle x,y,s,t\, | \, x^2,\,y^2,\,s^2,\,t^2,\,(xy)^2,\,(st)^2,\,(ty)^4,\, (sxy)^2t,\,txty \rangle
\end{equation}
We show that $N=\langle (sx)^3\rangle$ is a normal subgroup of $U$. For this it is sufficient to prove that $N$ is invariant under conjugation by $t$, as from $y=txt$ and automatic invariance of $N$ under conjugation by $s$ and $x$ it then follows that $yNy=N$. From the last two relators in \eqref{eq:Ut} we have $1=(sxy)^2t=(s(ty)^2)^2t$, which, using $(ty)^4=1$, gives $s(ty)^2s=yty$. It follows that conjugation by $s$ fixes $t$ (by the relation $(st)^2=1$) and interchanges $yty$ with $(ty)^2$. On the other hand, as $x=tyt$, one similarly obtains that conjugation by $x$ fixes $(ty)^2$ and interchanges $t$ with $yty$. It follows that the composition of the two conjugations, that is, the mapping $z\mapsto (xs)z(sx)$, induces a $3$-cycle $t\mapsto yty \mapsto (ty)^2\mapsto t$, so that $t$ commutes with $(sx)^3$ and hence preserves $N$. (This is what we saw before but now we needed to make sure that it was established solely from the presentation \eqref{eq:Ut}.)
\smallskip

By the Reidemeister-Schreier theory implemented in MAGMA in the form of its Rewrite command one may check that $N$ is a free group, that is, $N$ is infinite cyclic. Also, by MAGMA one can check that $U/N\cong S_4$, of order $24$. Now, for an arbitrary integer $m\ge 1$ let $N_m= \langle (sx)^{3m}\rangle$ be the cyclic subgroup of $N$ of index $m$. Since $N_m$ is characteristic in $N$ and so normal in $U$, we may use the Third isomorphism theorem to write $U/N\cong (U/N_m)/ (N/N_m)$ and as $U/N_m\cong H$, $N/N_m \cong C_m$ and $U/N\cong S_4$ it follows that $|H|=24m$, as claimed. This proves correctness of the presentation \eqref{eq:Ht}.
\medskip

We conclude by showing that none of the edge-biregular maps given by the group $H_p$ from  \eqref{eq:Ht} is regular. For such a map to be regular there would have to be an automorphism of $H$ of order $2$ interchanging $s$ with $x$ and $t$ with $y$. If this is the case then $H$ would also contain the relator $ysyt$. From the relator $txty$ of $H$ we have $y=txt$, which, when substituted into  $ysyt=1$ and canceling terms, gives $txsx=1$. Combining this with $tysy=1$ yields $xsx=ysy$, or, equivalently, $(sxy)^2=1$. But this in combination with the relator $(sxy)^2t$ of \eqref{eq:Ht} gives $t=1$, a contradiction. This completes  both the analysis related to the fourth item of Theorem \ref{thm:main-odd} as well as our proofs of the main results from section \ref{sec:res}.  \hfill $\Box$

\section{Concluding remarks}

The maps $M_p$ identified in the last part of our proof, those defined by the group $H_p$ presented as in \eqref{eq:Ht}, deserve particular attention. Their structure is best visualized by considering the associated embedded Cayley graph ${\rm Cay}(H_p,X)$ for the group $H_p$ and the generating set $X=\{x,y,s,t\}$. If superimposed onto the map $M_p$, the associated embedding of ${\rm Cay}(H_p,X)$ displays cycles labelled alternately with $y$ and $t$ `around' each vertex and cycles labelled with $x$ and $s$ `around' each face, while the 4-cycles labelled $xyxy$ and $stst$ `surround', respectively, bold and dashed edges. The existence of the relator $txty$ in the presentation \eqref{eq:Ht}, equivalent to the relation $x=tyt$, demonstrates that all the bold edges are loops. Applying this knowledge to the relator $(sxy)^2t$, which (being of odd length) signals non-orientability, one sees that the edges in the unshaded orbit partition into pairs of double-edges, each forming a `central cycle' of a M\"obius strip in the embedding. The Cayley graph ${\rm Cay}(H_p,X)$, part of which is shown in Figure \ref{fig2}, makes this clear. This is also an alternative way of proving non-regularity for this map. Namely, the two edge orbits contain (bold) loops on the one hand, and non-orientable (dashed) 2-cycles on the other, so there will certainly not be an automorphism of the group which interchanges these two orbits of edges.

\begin{figure}
\centering
\begin{tikzpicture}
\clip (-7,6)--(-7,15)--(7,15)--(7,6) -- (-7,6);
 \foreach \x in {88,92,62,58,118,122}
    \draw [line width=2pt] (\x:8) -- (\x:10);
 \foreach \x in {84,54,114}
    \draw [line width=2pt] (\x:8.7) -- (\x+12:8.7);
 \foreach \x in {84,54,114}
    \draw [line width=2pt] (\x:9.3) -- (\x+12:9.3);
 \foreach \x in {84,96,66,54,114,126}
    \draw [line width=1pt] (\x:8.7) -- (\x:9.3);
 \foreach \x in {88,58,118}
    \draw [line width=1pt] (\x:8) -- (\x+4:8) (\x:10) -- (\x+4:10);
 \foreach \x in {84,54,114}
    \draw [dashed, line width=2pt] (\x:8.7) -- (\x+4:8) (\x:9.3) -- (\x+4:10);
 \foreach \x in {96,66,126}
    \draw [dashed, line width=2pt] (\x:8.7) -- (\x-4:8) (\x:9.3) -- (\x-4:10);
  \foreach \x in {88,58,118,148}
    \draw [dash pattern={on 7pt off 3pt}, line width=2pt] (\x:8) -- (\x-26:8);
 \foreach \x in {84,54,114,144}
    \draw [dash pattern={on 7pt off 3pt}, line width=2pt] (\x:8.7) -- (\x-18:8.7);
 \foreach \x in {88,58,28,118}
    \draw [dash pattern={on 7pt off 3pt}, line width=2pt] (\x:10) arc (\x-72:\x+120:3.18);
     \foreach \x in {84,54,24,114}
    \draw [dash pattern={on 7pt off 3pt}, line width=2pt] (\x:9.3) arc (\x-84:\x+110:3.18);

\foreach \x in {88,58,118}
\draw
 (\x:8) node[circle,fill,inner sep=2.5pt]{}
 (\x+4:8) node[circle,fill,inner sep=2.5pt]{}
  (\x:10) node[circle,fill,inner sep=2.5pt]{}
 (\x+4:10) node[circle,fill,inner sep=2.5pt]{};

 \foreach \x in {84,54,114}
\draw
 (\x:8.7) node[circle,fill,inner sep=2.5pt]{}
 (\x+12:8.7) node[circle,fill,inner sep=2.5pt]{}
  (\x:9.3) node[circle,fill,inner sep=2.5pt]{}
 (\x+12:9.3) node[circle,fill,inner sep=2.5pt]{};

   \node[draw, text width=0.1\linewidth,inner sep=2mm,align=center,
      below left] at (current bounding box.north east)
    {Key

$x$
\begin{tikz} \draw [line width=2pt] (0,0)--(1,0); \end{tikz}

$y$
\begin{tikz} \draw [line width=1pt] (0,0)--(1,0); \end{tikz}

$s$
\begin{tikz} \draw [dash pattern={on 7pt off 3pt}, line width=2pt]  (0,0)--(1,0); \end{tikz}

$t$
\begin{tikz} \draw [dashed, line width=2pt]  (0,0)--(1,0) ;\end{tikz}
};

\end{tikzpicture}
\caption{Part of the Cayley graph ${\rm Cay}(H_p,X)$ with generating set $X=\{x,y,s,t\}$}
\label{fig2}
\end{figure}
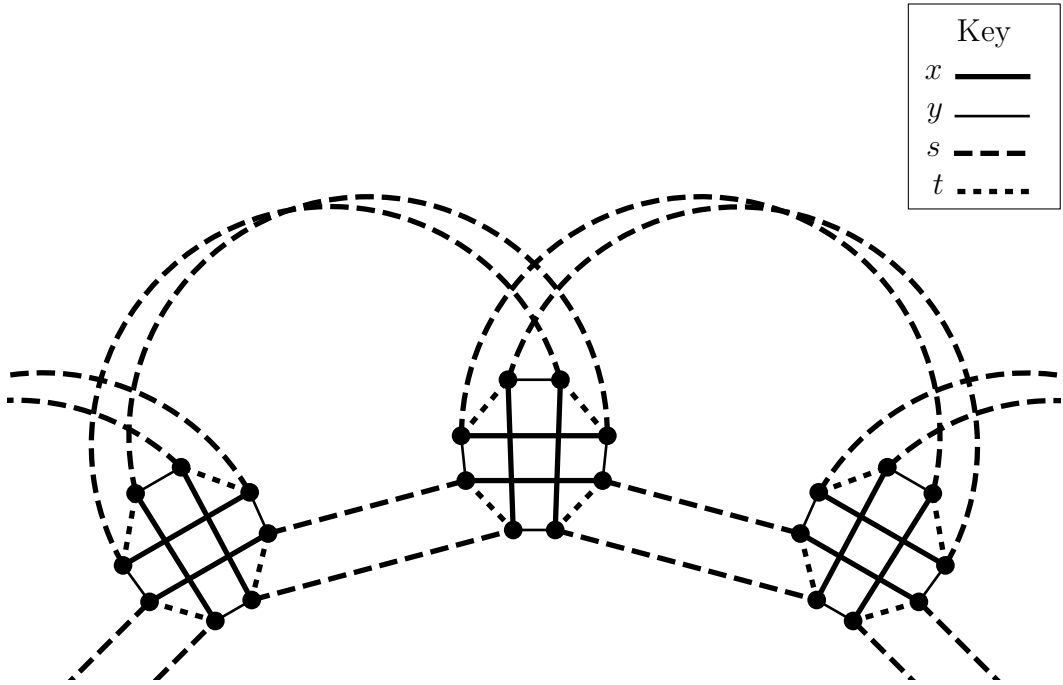

\smallskip
In section \ref{sec:alg} we pointed out that edge-biregular maps of type $(k,\ell)$ arise as smooth quotients of the subgroup of the full triangle group \eqref{eq:Tkl} uniquely determined by inclusion of the generators $R_0$ and $R_2$ while excluding $R_1$. In this context it may be interesting to review results for maps on surfaces of negative prime Euler characteristic arising from smooth quotients of all the (up to) seven possible index-two subgroups of the full triangle groups. Out of these, the subgroup most frequently referred to is $\langle \overline{R}_0, \overline{R}_1,\overline{R}_2\rangle$, the smooth quotients of which form the class of orientably-regular maps. A complete classification of orientably-regular maps of Euler characteristic $-p$ for prime $p$ can be found in \cite{CST}, with earlier results for orientably-regular maps with `large' automorphism groups available in \cite{BJ}. The bi-rotary maps of negative prime Euler characteristic, i.e., those arising from smooth quotients of the subgroup $\langle R_0, \overline{R}_1,\overline{R}_2\rangle$, have been classified in \cite{BCS}. Since the interchange of the involutory generators $R_0$ and $R_2$ induces map duality, analogous classification of maps defined by smooth quotients of the subgroup $\langle \overline{R}_0, \overline{R}_1, R_2\rangle$ of \eqref{eq:Tkl} follows from \cite{BCS} by dualisation.
\smallskip

This way, results in this paper are a further contribution to classification of highly symmetric maps on surfaces as above, this time arising as smooth quotients of the subgroup $\langle R_0,\overline{R}_1, R_2\rangle$ of index two in the full triangle group. In order to have a complete list of highly symmetric maps on surfaces with Euler characteristic $-p$ for prime $p$ that can be obtained as smooth quotients of index-two subgroups of triangle groups it is therefore sufficient to examine, up to duality, only two more subgroups, namely, $\langle \overline{R}_0, R_1, R_2\rangle$ and $\langle \overline{R}_0, R_1,\overline{R}_2 \rangle$. For completeness we recall that the fully regular maps on these surfaces, that is those arising as smooth quotients of the full triangle groups, have been classified in \cite{BNS}.
\medskip
\bigskip

\noindent{\bf Acknowledgements.} \
Both authors attended the Symmetries of Discrete Objects 2020 conference held in Rotorua, New Zealand and would like to thank the organisers Marston Conder and Gabriel Verret for arranging the event and for their generous hospitality. It was at this Symmetries of Discrete Objects workshop that some of the above research was conducted and partial results were presented. The first author is grateful for the travel grants from both the Open University and also from the London Mathematical Society which enabled her to attend the conference.
The second author gratefully acknowledges support from the APVV Research Grants 17-0428 and 19-0308, as well as from the VEGA Research Grants 1/0238/19 and 1/0206/20.
Both authors would like to thank the anonymous referee whose comments made us aware of both errors and omissions in our original manuscript, leading us to make significant improvements to this paper.

\end{document}